\newtheorem{theorem}{Theorem}[section]
\newtheorem{proposition}{Proposition}[section]
\newtheorem{lemma}[theorem]{Lemma}
\newtheorem{remark}[theorem]{Remark}
\title{Local well-posedness and wave breaking results for periodic solutions of a shallow water equation for waves of moderate amplitude}
\author{Nilay Duruk Mutluba\c{s}\\
University of Vienna, Faculty of Mathematics,\\
Nordbergstrasse 15, 1090 Vienna, Austria\\
nilay.duruk.mutlubas@univie.ac.at}
\begin{document}

\maketitle
 \begin{abstract}
  \noindent 
  We study the local well-posedness of a periodic nonlinear equation for surface waves of moderate amplitude in shallow water. We use an approach due to Kato which is based on semigroup theory for quasi-linear equations. We also show that singularities for the model equation can occur only in the form of wave breaking, in particular surging breakers.
 \end{abstract}
 
  \noindent
 {\it Keywords}: quasilinear hyperbolic equation, well-posedness, wave breaking.
 
 \section{Introduction}
We are concerned with an evolution equation which models the propagation of surface waves of moderate amplitude in the shallow water regime:
   \begin{align}
    \label{MAE1}
  & u_t + u_x + \frac{3}{2}\varepsilon u u_x -\frac{3}{8}\varepsilon^{2} u^2u_x + \frac{3}{16}\varepsilon^{3} u^3 u_x+\frac{\mu}{12}(u_{xxx}-u_{xxt})\nonumber \\
  	&+\frac{7\varepsilon}{24}\mu (uu_{xxx} +2 u_xu_{xx})=0,~~~~x\in\mathbb{R},~t>0.
  \end{align}
Here $u(x,t)$ is the free surface elevation and $\varepsilon$ and $\mu$ represent the amplitude and shallowness parameters, respectively.   

Since the exact governing equations for water waves fail to provide explicit solutions, many approximate model equations have been proposed, which are based on linear theory and were therefore inadequate to explain potential nonlinear behaviours like wave breaking or solitary waves. Hence, many competing nonlinear models have been suggested to manage these phenomena. One of the most prominent examples is the Camassa-Holm (CH) equation \cite{CH93} which is an integrable, infinite-dimensional, Hamiltonian system \cite{Con01, CGI, MKST}. A significant feature of the CH equation is that some of the bounded classical solutions develop singularities in finite time in the form of wave breaking, i.e. their slope becomes unbounded \cite{CE98}. Beyond the breaking time, the solutions recover in the sense of global weak solutions \cite{BC07,BC07II}. The relevance of the CH equation as a model for the propagation of shallow water waves was discussed by Johnson \cite{J02}, where he showed that the CH equation describes 
the horizontal component of the velocity field at a certain depth within the fluid; see also \cite{Con11}. Following the ideas presented therein, Constantin and Lannes derived the evolution equation (\ref{MAE1}) for the free surface which approximates the governing equations to the same order as the CH equation \cite{ConLn09}.

Local well-posedness results for the initial value problem associated to (\ref{MAE1}) was first proved by Constantin and Lannes \cite{ConLn09} for initial data $u_0\in H^{s+1}$, $s>3/2$. It has been recently shown that local well-posedness can also be obtained for a class of initial data comprising less regular data $u_0\in H^s$, $s>3/2$ additionally \cite{NDM13}. Enlarging the class enhances finding initial profiles that develop singularities in finite time. The model equation also possesses solitary travelling wave solutions decaying at infinity \cite{G12}. Their orbital stability has been recently studied using an approach proposed by Grillakis, Shatah and Strauss \cite{DG13}, and taking advantage of the Hamiltonian structure of (\ref{MAE1}).

In the present paper, we look for solutions of the Cauchy problem corresponding to (\ref{MAE1}) which are spatially periodic of period 1. We employ an approach due to Kato using semigroup theory for quasi-linear equations. Furthermore, we prove that singularities arise in finite time in the form of breaking waves.

Before we give the local well-posedness results, we state the theory Kato proposed:

\section{Kato's theory}
\label{Katotheory}
Consider the abstract quasi-linear evolution equation in the Hilbert space $X$:
  \begin{equation}
  \label{qlee}
     u_t =A(u)u + f(u), ~~~~t\geq 0,~~~~~u(0)=u_0.
  \end{equation}
Let $Y$ be a second Hilbert space such that $Y$ is continuously and densely injected into $X$ and let $S:Y\rightarrow X$ be a topological isomorphism. Assume that

\begin{itemize}
\item[(A1)] Given $C>0$, for every $y\in Y$ with $||y||_Y\leq C$, $A(y)$ is quasi-m-accretive on $X$, i.e.
$A(y)$ is the generator of a $C_0$ semigroup $\{T(t)\}_{t\geq 0}$ in $X$ satisfying $||T(t)||\leq Me^{\omega t}$ with $M=1$.
\item[(A2)] For every $y\in Y$, $A(y)$ is a bounded linear operator from $Y$ to $X$ and
\begin{displaymath}
||(A(y)-A(z))\omega||_X\leq c_1||y-z||_X||\omega||_Y,~~~~y,z,\omega\in Y.
\end{displaymath}
\item[(A3)] For every $C>0$, there is a constant $c_2(C)$ such that $SA(y)S^{-1}=A(y)+B(y)$, for some bounded
linear operator $B(y)$ on $X$ satisfying
 \begin{displaymath}
||(B(y)-B(z))\omega||_X\leq c_2(C)||\omega||_X,~~~~\omega\in X.
\end{displaymath}
\item[(A4)] The function $f$ is bounded in $Y$ and Lipschitz in $X$ and $Y$, i.e.
\begin{equation*}
||f(y)||_Y\leq M
\end{equation*}
for some constant $M>0$, as well as
\begin{equation*}
||f(y)-f(z)||_X\leq c_3||y-z||_X,~~~~~\forall y,z\in X
\end{equation*}
and
\begin{equation}
||f(y)-f(z)||_Y\leq c_4||y-z||_Y,~~~~~\forall y,z\in Y.\label{lip}
\end{equation}
\end{itemize}
Here $c_1$, $c_2$, $c_3$ and $c_4$ are non-negative constants.

\begin{theorem} \cite{KatoI, KatoII} 
\label{kato} 
Assume (A1), (A2), (A3), (A4) hold.
Given $u_0\in Y$, there is a maximal $T>0$, depending on $u_0$, and a unique solution $u$ to (\ref{qlee}) such that
\begin{displaymath}
u=(u_0,.)\in C([0,T),Y)\cap C^1([0,T),X).
\end{displaymath}
Moreover, the map $u_0\mapsto u(u_0,.) $ is continuous from $Y$ to \\
\mbox{$C([0,T),Y)\cap C^1([0,T),X)$}.
\end{theorem}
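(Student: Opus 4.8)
The plan is to prove Theorem~\ref{kato} by the classical Picard-type iteration that reduces the quasi-linear problem to a sequence of \emph{linear} nonautonomous ones, following Kato's scheme. Fix $C=2\|u_0\|_Y$ and a ball $B_R\subset Y$ of radius $R$ slightly larger than $\|u_0\|_Y$, set $u^{(0)}(t)\equiv u_0$, and given $u^{(n)}\in C([0,T],Y)$ with values in $B_R$, let $u^{(n+1)}$ solve $\partial_t u^{(n+1)}=A(u^{(n)}(t))u^{(n+1)}+f(u^{(n)}(t))$ with $u^{(n+1)}(0)=u_0$. The existence of $u^{(n+1)}\in C([0,T],Y)\cap C^1([0,T],X)$ comes from the theory of linear evolution equations: (A1) makes $\{A(u^{(n)}(t))\}_t$ a stable family of quasi-$m$-accretive generators on $X$ with stability constants depending only on $C$; (A3) together with the isomorphism $S$ promotes this to a stable family on $Y$, the generator part being unchanged up to the bounded perturbation $B$; and (A2) supplies the norm-continuity $t\mapsto A(u^{(n)}(t))\in \mathcal{L}(Y,X)$ needed to invoke Kato's generation theorem for the two-parameter evolution system $U^{(n)}(t,s)$, the inhomogeneity being absorbed by Duhamel's formula since $f(u^{(n)}(\cdot))$ is bounded in $Y$ by (A4).

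Next I would establish \emph{uniform} bounds and convergence. The $Y$-energy inequality coming out of quasi-$m$-accretivity, roughly $\tfrac{d}{dt}\|u^{(n+1)}\|_Y\le (\omega+\|B\|)\|u^{(n+1)}\|_Y+\|f(u^{(n)})\|_Y$ with constants depending only on $C$, plus Gronwall, shows that for $T=T(\|u_0\|_Y)$ small enough the whole sequence stays in $B_R$ and is bounded in $C([0,T],Y)$, hence in $C^1([0,T],X)$ via the equation. To pass to the limit one estimates the differences $w^{(n)}:=u^{(n+1)}-u^{(n)}$ in the \emph{weaker} norm of $X$: writing $\partial_t w^{(n)}=A(u^{(n)})w^{(n)}+(A(u^{(n)})-A(u^{(n-1)}))u^{(n)}+f(u^{(n)})-f(u^{(n-1)})$, hypothesis (A2) bounds the middle term by $c_1\|w^{(n-1)}\|_X\|u^{(n)}\|_Y\le c_1R\|w^{(n-1)}\|_X$ and (A4) bounds the last by $c_3\|w^{(n-1)}\|_X$, so an $X$-energy estimate and Gronwall give geometric (in fact $(cT)^n/n!$-type) decay of $\sup_{[0,T]}\|w^{(n)}\|_X$ after perhaps shrinking $T$. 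Thus $(u^{(n)})$ is Cauchy in $C([0,T],X)$ with a limit $u$; interpolating the uniform $Y$-bound with strong $X$-convergence yields $u\in C([0,T],X)\cap L^\infty([0,T],Y)\cap C_w([0,T],Y)$, and one checks directly that $u$ solves~(\ref{qlee}), whence $u\in C^1([0,T],X)$. Uniqueness follows by the same $X$-energy estimate applied to the difference of two solutions.

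The remaining and most delicate points are the \emph{strong} continuity $u\in C([0,T),Y)$ and the continuity of the data-to-solution map into $C([0,T),Y)\cap C^1([0,T),X)$. Weak continuity in $Y$ combined with almost-everywhere continuity of $t\mapsto\|u(t)\|_Y$ gives right-continuity at $t=0$, and Kato's argument then restarts the whole construction from each later time $t_0$, exploiting that it depends only on the $Y$-norm of the datum (this is precisely why the same $T$ works on an entire $Y$-ball), to obtain norm-continuity on all of $[0,T)$. For continuous dependence I would compare solutions $u,\bar u$ with data $u_0,\bar u_0$: the $X$-energy estimate applied to $v=u-\bar u$ gives $\|v\|_{C([0,T],X)}\le c\|u_0-\bar u_0\|_X$, so the map is Lipschitz from $X$ to $C([0,T],X)$. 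I expect the genuine obstacle to be upgrading this to $Y$, since one cannot naively differentiate $\|v\|_Y$: the term $(A(u)-A(\bar u))\bar u$ is controlled only in $X$ by (A2). Here one needs Kato's trick of estimating $\|Sv\|_X$, using (A3) to replace the commutator of $S$ with $A$ by the bounded operator $B$, together with a limiting/approximation argument (a Bona--Smith-type mollification of the data, or the second-resolvent identity relating the evolution systems $U(t,s)$ and $\bar U(t,s)$) to close the estimate in $Y$ without loss of derivatives. Finally, the maximality of $T$ and the standard blow-up alternative are obtained by the usual continuation argument.
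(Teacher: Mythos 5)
The paper offers no proof of Theorem~\ref{kato}: it is quoted verbatim as a black-box result from Kato's papers \cite{KatoI, KatoII}, so there is no in-paper argument to compare yours against. Measured against Kato's actual published proof, your outline is a faithful reconstruction of the standard scheme: freezing the coefficient to get a sequence of linear nonautonomous problems, using (A1) for stability of the family of generators in $X$ and (A3) with the isomorphism $S$ to transfer stability to $Y$, (A2) for the time-regularity needed to build the evolution systems $U^{(n)}(t,s)$, uniform $Y$-bounds by Gronwall, contraction of the differences in the weaker $X$-norm via (A2) and the $X$-Lipschitz part of (A4), and uniqueness by the same $X$-energy estimate. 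Your identification of where the real difficulty lies --- strong continuity of $t\mapsto u(t)$ in $Y$ and continuity of the data-to-solution map into $C([0,T),Y)$ --- is also correct, and the tools you name ($\|Sv\|_X$ estimates using (A3), plus a regularization of the data to close the estimate without losing derivatives) are the ones Kato uses.

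Two small points where the sketch is loose rather than wrong. First, the step ``weak continuity in $Y$ combined with almost-everywhere continuity of $t\mapsto\|u(t)\|_Y$ gives right-continuity at $t=0$'' should read: the energy estimate gives $\limsup_{t\downarrow 0}\|u(t)\|_Y\le\|u_0\|_Y$, and in a Hilbert space weak convergence together with convergence of norms implies strong convergence; the restart-from-$t_0$ argument then gives right-continuity everywhere, while left-continuity needs a separate argument (time reversal or the continuous-dependence estimate). Second, the hardest part of the theorem --- Lipschitz/continuous dependence in the $Y$-topology --- is only gestured at; that is acceptable here precisely because the paper itself treats the whole theorem as a citation, but it is the one place where your proposal, taken as a standalone proof, would still need substantial work.
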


 \section{Local Well-Posedness}
Consider the following periodic Cauchy problem where (\ref{MAE1}) is rewritten in quasi-linear equation form:
  \begin{align}
   \label{SG}
    & u_t =(\partial_x+\frac{7}{2}\varepsilon u\partial_x )u+f(u)~~~~~x\in\mathbb{R},~t>0, \\
    & u(x,0)= u_0(x) ~~~~~x\in\mathbb{R},\label{IV}\\
    & u(x,t)=u(x+1,t)~~~~~x\in\mathbb{R},~t>0.\label{Per}
  \end{align}
 Here,
  \begin{align}
f(u)= -(1-\frac{\mu}{12}\partial_x^2)^{-1}\partial_x[2u+\frac{5}{2}\varepsilon u^{2}-\frac{1}{8}\varepsilon^{2}u^{3}+\frac{3}{64}\varepsilon^{3}u^{4}-\frac{7}{48}\varepsilon\mu u_x^2]. \label{f}
  \end{align}
Hereafter, we denote the Sobolev space of functions of period 1 by $H_p^s$.
 \begin{proposition}
 \label{Lwp}
   Let $u_0\in H_p^s$, $s>\frac{3}{2}$ be given. Then there exists $T>0$, depending on $u_0$, such that there is a unique solution $u$ to
   (\ref{SG})-(\ref{Per}) satisfying
   \begin{equation*}
    u=u(u_0,.)\in C([0,T), H_p^s)\cap C^1([0,T),L^{2}[0,1]).
   \end{equation*}
Moreover, the map $u_0\in H_p^s\mapsto u(u_0,.) $ is continuous from $H_p^s$ to \\
\mbox{$C([0,T),H_p^s)\cap C^1([0,T),L^2[0,1])$}.
  \end{proposition}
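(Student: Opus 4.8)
The plan is to cast (\ref{SG})--(\ref{Per}) in the abstract form (\ref{qlee}) and invoke Theorem~\ref{kato} with $X = L^{2}[0,1]$, $Y = H_p^s$, $A(u) = \partial_x + \tfrac{7}{2}\varepsilon u\,\partial_x = (1+\tfrac{7}{2}\varepsilon u)\partial_x$, $f$ as in (\ref{f}), and $S = \Lambda^s := (1-\partial_x^2)^{s/2}$, which is a topological isomorphism of $H_p^s$ onto $L^{2}[0,1]$; the regularity and continuous dependence asserted in the proposition are then exactly the conclusion of Theorem~\ref{kato}. Since $f$ is polynomial in $u$ and $u_x$ and hence not bounded on all of $H_p^s$, I would first carry out the routine localization: multiply the nonlinear part of $A$ and $f$ by a smooth cut-off of $\|u\|_{H_p^s}$ supported in $\{\|u\|_{H_p^s}\le 2\|u_0\|_{H_p^s}+1\}$, verify (A1)--(A4) for the modified problem, and observe that its unique solution stays in that ball on a short time interval, where it solves the original problem; uniqueness and continuous dependence then carry over.

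To verify (A1), note that for $\|y\|_{H_p^s}\le C$ the coefficient $a := 1+\tfrac{7}{2}\varepsilon y$ is $C^1$ because $s>3/2$, and integration by parts on the circle gives $|\langle a\,\partial_x v, v\rangle_{L^2}| = \tfrac12|\langle a_x v, v\rangle_{L^2}| \le \tfrac{7}{4}\varepsilon\|y_x\|_{L^\infty}\|v\|_{L^2}^2 \le \omega(C)\|v\|_{L^2}^2$, with $\|y_x\|_{L^\infty}\lesssim\|y\|_{H_p^s}$ by Sobolev embedding; the range condition for $\lambda I - A(y)$ follows by solving the associated first-order linear ODE with periodic boundary conditions (using that $1+\tfrac{7}{2}\varepsilon y$ stays bounded away from $0$, as holds in the relevant small range of $\varepsilon$), so $A(y)$ generates a $C_0$-semigroup with $M=1$ and uniform $\omega$. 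For (A2): $\partial_x$ maps $H_p^s$ into $H_p^{s-1}\hookrightarrow L^{2}[0,1]$, multiplication by an $H_p^s$-function preserves $H_p^{s-1}$ since $s-1>1/2$, and $\|(A(y)-A(z))w\|_{L^2} = \tfrac{7}{2}\varepsilon\|(y-z)w_x\|_{L^2}\le\tfrac{7}{2}\varepsilon\|y-z\|_{L^2}\|w_x\|_{L^\infty}\lesssim\|y-z\|_{L^2}\|w\|_{H_p^s}$, using $H_p^{s-1}\hookrightarrow L^\infty$.

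For (A3) one computes, using that $\partial_x$ commutes with $\Lambda^s$, that $S A(y) S^{-1} - A(y) = \tfrac{7}{2}\varepsilon[\Lambda^s, y]\Lambda^{-s}\partial_x =: B(y)$; the boundedness of $B(y)$ on $L^{2}[0,1]$ and the bound $\|(B(y)-B(z))w\|_{L^2}\le c_2(C)\|w\|_{L^2}$ for $\|y\|_{H_p^s},\|z\|_{H_p^s}\le C$ then follow from the Kato--Ponce commutator estimate $\|[\Lambda^s, g]h\|_{L^2}\lesssim\|g_x\|_{L^\infty}\|h\|_{H_p^{s-1}}+\|g\|_{H_p^s}\|h\|_{L^\infty}$ (valid on the torus), applied with $g=y$ or $g=y-z$ and $h=\Lambda^{-s}\partial_x w$, noting $\|\Lambda^{-s}\partial_x w\|_{H_p^{s-1}}\lesssim\|w\|_{L^2}$ and $\|\Lambda^{-s}\partial_x w\|_{L^\infty}\lesssim\|w\|_{L^2}$. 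For (A4) the key structural fact is that $\Lambda_\mu^{-1}\partial_x$, with $\Lambda_\mu := 1-\tfrac{\mu}{12}\partial_x^2$, regularizes by one derivative while $\Lambda_\mu^{-1}\partial_x^2$ is bounded on $L^{2}[0,1]$; the polynomial terms and the quadratic term $u_x^2$ lie in $H_p^{s-1}$ by the algebra property of $H_p^{s-1}$, Moser-type estimates give $\|f(y)\|_{H_p^s}\le M$ on the cut-off ball together with the $H_p^s$-Lipschitz bound (\ref{lip}), and for the $L^2$-Lipschitz bound one writes $u_x^2 - v_x^2 = \partial_x\!\big[(u-v)(u_x+v_x)\big] - (u-v)(u_{xx}+v_{xx})$ and estimates the two contributions using, respectively, the $L^2$-boundedness of $\Lambda_\mu^{-1}\partial_x^2$ and the one-derivative gain in $\Lambda_\mu^{-1}\partial_x$ with a duality argument.

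Once (A1)--(A4) are in place, Theorem~\ref{kato} directly yields the solution and its continuous dependence, which is the statement of Proposition~\ref{Lwp}. I expect the main obstacle to be the verification of (A3): producing the similarity transform and controlling the commutator $B(y)=[\Lambda^s,y]\Lambda^{-s}\partial_x$ uniformly for $y,z$ in a ball is where the genuine harmonic analysis enters, and the periodic Kato--Ponce estimate must be invoked with care; the other delicate point is the handling of the quadratic term $u_x^2$ in (A4) and, in particular, the continuity of $f$ in the $L^2$-topology, which relies essentially on the smoothing provided by $\Lambda_\mu^{-1}\partial_x$.
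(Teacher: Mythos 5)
Your proposal follows the paper's proof essentially verbatim: the same choice $X=L^{2}[0,1]$, $Y=H_p^s$, $S=\Lambda^{s}$, the same splitting into $A(u)=(1+\tfrac{7}{2}\varepsilon u)\partial_x$ and the nonlocal $f$, and the same verification of (A1)--(A4) via integration by parts and periodicity, Sobolev embedding, the commutator estimate of Lemma \ref{Mlt} (your Kato--Ponce bound plays the same role), and the one-derivative smoothing of $(1-\tfrac{\mu}{12}\partial_x^2)^{-1}\partial_x$ to get the $L^2$-Lipschitz bound on $f$ despite the $u_x^2$ term. The only substantive divergence is your range argument for (A1), which solves the periodic ODE directly and therefore needs $1+\tfrac{7}{2}\varepsilon y$ bounded away from zero; the paper instead shows that $A(y)+\lambda I$ has closed, dense range by proving the adjoint has trivial kernel (exploiting that $A^*$ is also quasi-dissipative), which requires no such nondegeneracy of the coefficient.
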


  To prove well-posedness of the solution locally in time which is proposed above, we will apply the approach mentioned in Section \ref{Katotheory}, for $X=L^{2}[0,1]$, $Y=H_p^s$ with $s>3/2$ and $S=\Lambda^{s}$ with $\Lambda=(1-\partial_x^2)^{1/2}$. First of all, we need the following lemmas ensuring the validity of the assumptions (A1)-(A4). For convenience, we neglect the constant coefficients of the terms appearing in the evolution equation.

  \begin{lemma}
The operator $A(u)=u\partial_x+\partial_x$, with domain
\begin{displaymath}
 \mathcal{D}(A)=\{\omega\in L^{2}[0,1]:(1+u)\partial_x\omega\in L^2[0,1]\}\subset L^{2}[0,1]
\end{displaymath}
is quasi-m-accretive if $u\in H_p^s$, $s>3/2$.
\end{lemma}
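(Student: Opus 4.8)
The plan is to verify that $A(u)=(1+u)\partial_x$ generates a quasi-contraction semigroup on $X=L^2[0,1]$ by combining a dissipativity estimate with a range (surjectivity) condition, i.e. by invoking the Lumer--Phillips theorem. I would first record that, since $u\in H_p^s$ with $s>3/2$, the Sobolev embedding $H_p^s\hookrightarrow C_p^1$ gives $\|u\|_{L^\infty}+\|u_x\|_{L^\infty}\le K$ for some constant $K=K(\|u\|_{H_p^s})$; in particular $1+u$ is bounded, and $\partial_x$ acting on periodic functions requires no boundary contributions in integration by parts. I would then aim to show that $A(u)+\lambda I$ is accretive on $X$ for a suitable real $\lambda=\lambda(K)$, and that its range is all of $X$.

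The key computation is the dissipativity estimate. For $\omega\in\mathcal D(A)$ one computes
\begin{align*}
\bigl(A(u)\omega,\omega\bigr)_{L^2}
&=\int_0^1 (1+u)\,\omega_x\,\omega\,dx
=\tfrac12\int_0^1 (1+u)\,\partial_x(\omega^2)\,dx
=-\tfrac12\int_0^1 u_x\,\omega^2\,dx,
\end{align*}
where periodicity kills the boundary term. Hence $|(A(u)\omega,\omega)_{L^2}|\le \tfrac12\|u_x\|_{L^\infty}\|\omega\|_{L^2}^2\le \tfrac{K}{2}\|\omega\|_{L^2}^2$, so that $\bigl((A(u)+\lambda I)\omega,\omega\bigr)_{L^2}\ge 0$ for $\lambda:=K/2$; that is, $-(A(u)+\lambda I)$ is dissipative, equivalently $A(u)+\lambda I$ is accretive. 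The same bound applied with $A(u)$ replaced by $A(u)^*$ (or, equivalently, noting that the formal adjoint of $(1+u)\partial_x$ is $-\partial_x\circ(1+u)\cdot$, which differs by the bounded multiplication operator $-u_x$) shows that $A(u)^*+\lambda' I$ is accretive for a comparable $\lambda'$, which together with density of $\mathcal D(A)$ will give m-accretivity once the range condition is in place.

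For the range condition I would solve, for given $g\in L^2[0,1]$ and $\lambda$ large, the first-order linear ODE $(\lambda+\lambda_0)\omega+(1+u)\omega_x=g$ on the circle; since $1+u$ is continuous and (for $\lambda$ large, using the a priori $L^\infty$ bound on $u$, or more carefully noting $1+u$ may vanish — here one should absorb the sign of $1+u$ into the estimate rather than divide, so it is cleaner to argue via the accretivity estimate for the adjoint plus a standard approximation/Yosida argument) one obtains a unique periodic solution $\omega\in H_p^1\subset\mathcal D(A)$, showing $(\lambda+\lambda_0)I+A(u)$ is onto. Then Lumer--Phillips yields that $A(u)$ is quasi-m-accretive with $M=1$ and $\omega_0=\lambda_0$ in the notation of (A1). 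The main obstacle is the range/surjectivity step: because the coefficient $1+u$ can in principle change sign, one cannot simply integrate the ODE, and the clean route is to establish accretivity for both $A(u)+\lambda I$ and its adjoint and then appeal to the standard fact that a densely defined accretive operator whose adjoint is also accretive is m-accretive (equivalently, use the Yosida approximation together with the uniform resolvent bound coming from the dissipativity estimate). Everything else — the integration by parts, the Sobolev embedding bound, the identification of the adjoint — is routine.
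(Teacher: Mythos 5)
Your proposal is correct and follows essentially the same route as the paper: the integration-by-parts identity $(A(u)\omega,\omega)_{L^2}=-\tfrac12\int_0^1 u_x\omega^2\,dx$ together with the Sobolev embedding gives the accretivity bound with $\beta=\tfrac12\|u\|_{H_p^s}$, and the range condition is settled via the adjoint rather than by integrating the ODE (which, as you rightly note, fails because $1+u$ may vanish). The ``standard fact'' you invoke — that a closed, densely defined operator which is accretive together with its adjoint is m-accretive — is exactly what the paper unpacks by hand: closedness plus the dissipativity estimate give closed range, and triviality of $\ker A^*$ (proved using $\mathcal D(A)=\mathcal D(A^*)$ and the same accretivity bound) gives dense range.
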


\begin{proof}
 Let $X$ be a Hilbert space. A linear operator $A$ in $X$ is quasi-m-accretive if \cite{KatoII}
 \begin{enumerate}
  \item [(a)] There is a real number $\beta$ such that $(A\omega,\omega)_X\geq -\beta||\omega||_X^2$ for all $\omega\in D(A)$.
  \item [(b)] The range of $A+\lambda I$ is all of $X$ for some (or equivalently, all) $\lambda>\beta$.
 \end{enumerate}
Consider the $L^2$ inner product
\begin{align*}
 (A(u)\omega,\omega)_{L^{2}[0,1]}=& (u\partial_x\omega+\partial_x\omega,\omega)_{L^{2}[0,1]}.
\end{align*}
Using integration by parts, periodicity and a Sobolev embedding theorem, we obtain
\begin{align*}
(u\partial_x\omega,\omega)_{L^{2}[0,1]}+(\partial_x\omega,\omega)_{L^{2}[0,1]}&=-\frac{1}{2}(\omega\partial_x u,\omega)_{L^{2}[0,1]}\\
&\geq -\frac{1}{2}||u_x||_{L^\infty[0,1]}(\omega,\omega)_{L^{2}[0,1]}\\
&\geq -\frac{1}{2}||u||_{H_p^s}||\omega||_{L^{2}[0,1]}^2.
\end{align*}
Choosing $\beta=\frac{1}{2}||u||_{H_p^s}$, the operator satisfies the inequality in (a). Thus, $A(u)+\lambda I$ is dissipative for all $\lambda>\beta$. Moreover, observe that $A(u)$ is a closed operator. In the view of these facts, $A(u)+\lambda I$ has closed range in $L^2[0,1]$ for all $\lambda>\beta$. Hence, in order to prove (b), it is enough to show that $A(u)+\lambda I$ has dense range in $L^2[0,1]$ for all $\lambda>\beta$. We use the fact that if adjoint of an operator has trivial kernel, then the operator has dense range. 

For $A(u)=u\partial_x+\partial_x$, the adjoint operator is given by $A^*=-\partial_x(u+1)$ with
\begin{equation*}
\mathcal{D}(A^*)=\{\omega\in L^2[0,1]:-\partial_x((u+1)\omega)\in L^2[0,1]\}.
\end{equation*}
Using the Leibniz formula, we get
\begin{equation*}
\partial_x((u+1)\omega)=\omega\partial_x u+u\partial_x\omega+\partial_x\omega.
\end{equation*}
Therefore, since $u_x\in L^{\infty}[0,1]$ and $\omega\in L^2[0,1]$,
\begin{align*}
&\mathcal{D}(A)=\{\omega\in L^{2}[0,1]:(1+u)\partial_x\omega\in L^2[0,1]\}=\\
&\mathcal{D}(A^*)=\{\omega\in L^2[0,1]:-\partial_x((u+1)\omega)\in L^2[0,1]\}.
\end{align*}
Now, assume that the range of $A(u)+\lambda I$ is not all of $L^2[0,1]$. Then there exists $0\neq z\in L^2[0,1]$
such that $((A(u)+\lambda I)\omega,z)_{L^2}=0$ for all $\omega\in \mathcal{D}(A)$. Since $H_p^1\subset \mathcal{D}(A)$, $\mathcal{D}(A)=\mathcal{D}(A^*)$ is dense in $L^2[0,1]$. This means that there exists a sequence $z_k\in \mathcal{D}(A^*)$  which converges to an element $z\in L^2[0,1]$. Recall that $\mathcal{D}(A^*)$ is closed. Therefore, $z\in \mathcal{D}(A^*)$. Moreover,
\begin{equation*}
((A(u)+\lambda I)\omega,z)_{L^2}=(\omega,(A(u)+\lambda I)^*z)_{L^2}=0
\end{equation*}
reveals that $A^*(u)+\lambda z=0$ in $L^2$. Multiplying by $z$ and integrating by parts, we get
\begin{equation*}
0=((A^*(u)+\lambda I)z,z)_{L^2}=(\lambda z,z)_{L^2}+(z,A(u)z)_{L^2}\geq (\lambda-\beta)||z||_2^2~~~\forall \lambda>\beta,
\end{equation*}
and thus $z=0$ which contradicts our assumption. This completes the proof of (b). Therefore, the operator $A(u)$ is quasi-m-accretive.
\end{proof}

\begin{lemma}
For every $\omega\in H_p^s$ with $s>3/2$, $A(u)$ is bounded linear operator from $H_p^s$ to $L^{2}[0,1]$ and
\begin{displaymath}
||(A(u)-A(v))\omega||_{L^2[0,1]}\leq c_1||u-v||_{L^2[0,1]}||\omega||_{H_p^{s}}.
\end{displaymath}
\end{lemma}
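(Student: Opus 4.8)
The plan is to verify both assertions directly from the explicit form $A(u)=(1+u)\partial_x$ (after neglecting constant coefficients as stipulated), using nothing beyond H\"older's inequality and the Sobolev embedding $H_p^\sigma\hookrightarrow L^\infty[0,1]$, which holds for $\sigma>1/2$. Linearity of $A(u)$ in $\omega$ is immediate, so the content is the two quantitative bounds.

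For boundedness from $H_p^s$ to $L^2[0,1]$, I would simply estimate, for $\omega\in H_p^s$ with $s>3/2$,
\begin{displaymath}
\|A(u)\omega\|_{L^2[0,1]}=\|(1+u)\partial_x\omega\|_{L^2[0,1]}\leq \big(1+\|u\|_{L^\infty[0,1]}\big)\|\partial_x\omega\|_{L^2[0,1]}\leq \big(1+c\|u\|_{H_p^s}\big)\|\omega\|_{H_p^s},
\end{displaymath}
where the embedding $H_p^s\hookrightarrow L^\infty[0,1]$ (valid since $s>3/2>1/2$) controls $\|u\|_{L^\infty}$ and $\|\partial_x\omega\|_{L^2}\leq\|\omega\|_{H_p^s}$. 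Hence $A(u)\in\mathcal{L}(H_p^s,L^2[0,1])$.

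For the Lipschitz estimate, the key observation is that the unbounded part $\partial_x$ cancels in the difference, so that $(A(u)-A(v))\omega=(u-v)\partial_x\omega$. The one subtle point — and essentially the only obstacle here — is how to distribute the norms: we must end up with $\|u-v\|_{L^2[0,1]}$, not $\|u-v\|_{H_p^s}$, so the factor $u-v$ has to be placed in $L^2$ and $\partial_x\omega$ in $L^\infty$. Thus
\begin{displaymath}
\|(A(u)-A(v))\omega\|_{L^2[0,1]}=\|(u-v)\partial_x\omega\|_{L^2[0,1]}\leq \|u-v\|_{L^2[0,1]}\,\|\partial_x\omega\|_{L^\infty[0,1]}.
\end{displaymath}
It then remains to bound $\|\partial_x\omega\|_{L^\infty[0,1]}\leq c\|\partial_x\omega\|_{H_p^{s-1}}\leq c\|\omega\|_{H_p^s}$, and this last Sobolev embedding is exactly where the hypothesis $s>3/2$ is genuinely used, since it guarantees $s-1>1/2$. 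Setting $c_1=c$ yields the claimed inequality. (The alternative splitting $\|(u-v)\partial_x\omega\|_{L^2}\leq\|u-v\|_{L^\infty}\|\partial_x\omega\|_{L^2}$ would only produce $\|u-v\|_{H_p^s}$ via the embedding, which is the wrong norm for Kato's condition (A2); this is why the threshold $s>3/2$ rather than $s>1/2$ is forced.) Everything else is routine.
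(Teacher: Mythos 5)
Your proof is correct and follows essentially the same route as the paper: the Lipschitz bound rests on the identical key step of placing $u-v$ in $L^2[0,1]$ and $\partial_x\omega$ in $L^\infty[0,1]$ via the embedding $H_p^{s-1}\hookrightarrow L^\infty[0,1]$ for $s>3/2$. The only (immaterial) difference is that for the boundedness part the paper reuses the split $\|u\|_{L^2}\|\partial_x\omega\|_{L^\infty}$ and then substitutes $u-v$ for $u$, whereas you use the opposite H\"older split there; both are valid.
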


\begin{proof}
Given $\omega\in H_p^s$ with $s>3/2$,
\begin{align*}
||(u\partial_x+\partial_x)\omega||_{L^2[0,1]}& \leq ||u\partial_x\omega||_{L^2[0,1]}+||\partial_x\omega||_{L^2[0,1]}\\ &\leq ||u||_{L^2[0,1]}||\partial_x\omega||_{L^{\infty}[0,1]}+||\partial_x\omega||_{L^{\infty}[0,1]} \\
&\leq ||u||_{L^2[0,1]}||\partial_x\omega||_{H_p^{s-1}}+||\partial_x\omega||_{H_p^{s-1}}\leq c_1 ||u||_{L^2[0,1]}||\omega||_{H_p^{s}},
\end{align*}
in view of the inclusion property of Lebesgue spaces for finite intervals and Sobolev embedding theorems. Assumption (A2) follows from replacing $u$ by $u-v$ in the inequality.
\end{proof}
Before we verify (A3), we give a lemma needed for the proof:
\begin{lemma}
\label{Mlt} \cite{KatoII}
Let $f\in H_p^s$, $s>3/2$ and $M_f$ be the multiplication operator by $f$. Then, for $|\tilde{t}|,|\tilde{s}|\leq s-1$,
\begin{displaymath}
||\Lambda^{-\tilde{s}}[\Lambda^{\tilde{s}+\tilde{t}+1},M_f]\Lambda^{-\tilde{t}}\omega||_{L^2[0,1]}\leq c||f||_{H_p^s}||\omega||_{L^2[0,1]}.
\end{displaymath}
where $[,]$ represents the usual commutator of the linear operators. 
\end{lemma}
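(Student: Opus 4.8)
The plan is to pass to Fourier series on the circle and reduce the claim to a pointwise bound on the symbol of the operator, followed by a Young/Schur-type summation in which the hypothesis $s>3/2$ enters. Write $\omega=\sum_{k\in\mathbb{Z}}\widehat{\omega}(k)e^{2\pi ikx}$, set $\langle k\rangle=(1+4\pi^{2}k^{2})^{1/2}$, and recall that $\Lambda^{b}$ acts as the Fourier multiplier $\langle k\rangle^{b}$ while $M_{f}$ acts as discrete convolution by $(\widehat{f}(m))_{m\in\mathbb{Z}}$. A direct computation then gives, with $a:=\tilde{s}+\tilde{t}+1$,
\begin{align*}
\widehat{(\Lambda^{-\tilde{s}}[\Lambda^{a},M_{f}]\Lambda^{-\tilde{t}}\omega)}(k)&=\sum_{l\in\mathbb{Z}}K(k,l)\,\widehat{f}(k-l)\,\widehat{\omega}(l),\\
K(k,l)&:=\langle k\rangle^{-\tilde{s}}\big(\langle k\rangle^{a}-\langle l\rangle^{a}\big)\langle l\rangle^{-\tilde{t}}.
\end{align*}
By Parseval it suffices to bound the $\ell^{2}_{k}$-norm of the right-hand side by $c\|f\|_{H_{p}^{s}}\|\widehat{\omega}\|_{\ell^{2}}$.

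The heart of the argument is the symbol estimate
\begin{equation*}
|K(k,l)|\leq C\,\langle k-l\rangle\left(\frac{\max(\langle k\rangle,\langle l\rangle)}{\min(\langle k\rangle,\langle l\rangle)}\right)^{s-1}\qquad\text{for all }k,l\in\mathbb{Z}.
\end{equation*}
To prove it I would apply the mean value theorem, $\langle k\rangle^{a}-\langle l\rangle^{a}=a\,\xi^{a-1}(\langle k\rangle-\langle l\rangle)$ for some $\xi$ between $\langle k\rangle$ and $\langle l\rangle$, then use the Lipschitz bound $|\langle k\rangle-\langle l\rangle|\leq C|k-l|\leq C\langle k-l\rangle$ together with the inequality $\xi^{a-1}\langle k\rangle^{-\tilde{s}}\langle l\rangle^{-\tilde{t}}\leq(\max/\min)^{s-1}$; the latter follows by estimating $\xi^{a-1}$ by $\max(\langle k\rangle,\langle l\rangle)^{a-1}$ if $a\geq1$ and by $\min(\langle k\rangle,\langle l\rangle)^{a-1}$ if $a<1$, combining with $\langle k\rangle^{-\tilde{s}}\langle l\rangle^{-\tilde{t}}$ so that a single net power of $\langle k\rangle/\langle l\rangle$ or $\langle l\rangle/\langle k\rangle$ survives, and absorbing it via $|\tilde{s}|,|\tilde{t}|\leq s-1$ and $\max/\min\geq1$. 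This is the step I expect to be the main obstacle, since it requires a careful case distinction on the signs of $\tilde{s}$, $\tilde{t}$ and of $\tilde{s}+\tilde{t}$.

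Granting the symbol estimate, I would split the sum over $l$ into the near-diagonal range $|k-l|\leq\tfrac14\max(|k|,|l|)$ and its complement. On the near-diagonal range $\langle k\rangle$ and $\langle l\rangle$ are comparable, so $|K(k,l)|\leq C\langle k-l\rangle$; bounding the Fourier coefficient by $C\sum_{l}\langle k-l\rangle|\widehat{f}(k-l)|\,|\widehat{\omega}(l)|$ and using Young's inequality $\ell^{1}\ast\ell^{2}\to\ell^{2}$ gives the bound $C\|\langle\cdot\rangle\widehat{f}\|_{\ell^{1}}\|\widehat{\omega}\|_{\ell^{2}}$, where $\|\langle\cdot\rangle\widehat{f}\|_{\ell^{1}}\leq C_{s}\|f\|_{H_{p}^{s}}$ by Cauchy--Schwarz precisely because $\sum_{m}\langle m\rangle^{2-2s}<\infty$ when $s>3/2$. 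On the complementary range one has $\max(\langle k\rangle,\langle l\rangle)\leq C\langle k-l\rangle$, hence $|K(k,l)|\leq C\langle k-l\rangle^{s}\min(\langle k\rangle,\langle l\rangle)^{-(s-1)}$; writing $G(m):=\langle m\rangle^{s}|\widehat{f}(m)|$ (so $\|G\|_{\ell^{2}}=\|f\|_{H_{p}^{s}}$) and splitting once more according to whether $\langle l\rangle\leq\langle k\rangle$ or not, the first sub-case yields $C\langle k\rangle^{-(s-1)}(G\ast|\widehat{\omega}|)(k)$, controlled in $\ell^{2}_{k}$ by $\|\langle\cdot\rangle^{-(s-1)}\|_{\ell^{2}}\,\|\,G\ast|\widehat{\omega}|\,\|_{\ell^{\infty}}\leq C_{s}\|f\|_{H_{p}^{s}}\|\widehat{\omega}\|_{\ell^{2}}$ (using $\langle\cdot\rangle^{-(s-1)}\in\ell^{2}$ for $s>3/2$ and $\|\,G\ast|\widehat{\omega}|\,\|_{\ell^{\infty}}\leq\|G\|_{\ell^{2}}\|\widehat{\omega}\|_{\ell^{2}}$), while the second sub-case yields $C(G\ast\psi)(k)$ with $\psi(l)=\langle l\rangle^{-(s-1)}|\widehat{\omega}(l)|$, controlled by Young's inequality $\ell^{2}\ast\ell^{1}\to\ell^{2}$ together with $\|\psi\|_{\ell^{1}}\leq C_{s}\|\widehat{\omega}\|_{\ell^{2}}$, again by Cauchy--Schwarz and $s>3/2$. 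Adding the contributions yields the claim.

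Finally, in the case $\tilde{s}=0$ actually needed to verify (A3) --- where the operator is $[\Lambda^{s},M_{f}]\Lambda^{1-s}$ --- one may alternatively invoke the Kato--Ponce commutator estimate $\|[\Lambda^{s},M_{f}]h\|_{L^{2}}\leq c(\|f_{x}\|_{L^{\infty}}\|h\|_{H^{s-1}}+\|f\|_{H^{s}}\|h\|_{L^{\infty}})$ on the torus, together with the embeddings $H_{p}^{s-1}\hookrightarrow L^{\infty}$ and $f_{x}\in L^{\infty}$ valid for $s>3/2$, applied with $h=\Lambda^{1-s}\omega$.
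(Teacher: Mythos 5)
Your proof is correct, but note that the paper does not prove this lemma at all: it is quoted from Kato's KdV paper \cite{KatoII} and used as a black box, so what you have written is a self-contained substitute for that citation rather than a variant of an argument in the text. Your route --- Fourier series, the explicit commutator kernel $K(k,l)=\langle k\rangle^{-\tilde s}\bigl(\langle k\rangle^{a}-\langle l\rangle^{a}\bigr)\langle l\rangle^{-\tilde t}$ with $a=\tilde s+\tilde t+1$, the mean-value symbol bound $|K(k,l)|\le C\langle k-l\rangle\bigl(\max/\min\bigr)^{s-1}$, and a near-/off-diagonal Schur--Young summation --- is essentially the standard proof of this estimate and does close: in the case analysis on the sign of $a-1=\tilde s+\tilde t$, when $\langle k\rangle$ is the maximum the surviving ratio is $(\max/\min)^{\tilde t}$ and when $\langle l\rangle$ is the maximum it is $(\max/\min)^{\tilde s}$, both absorbed by $|\tilde s|,|\tilde t|\le s-1$; and every summation step invokes $s>3/2$ exactly where $\sum_m\langle m\rangle^{2-2s}<\infty$ or $\langle\cdot\rangle^{-(s-1)}\in\ell^2$ is needed. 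Two cosmetic points: your two off-diagonal sub-cases are labelled in the opposite order to the estimates you attach to them (the factor $\langle k\rangle^{-(s-1)}$ arises when $\langle l\rangle>\langle k\rangle$, i.e.\ when $\min=\langle k\rangle$), and the degenerate instances $a=0$ or $k=l$ should be dismissed as trivial since the kernel vanishes there. Your closing remark is also apt: for the single instance $\tilde s=0$, $\tilde t=s-1$ actually used to verify (A3), the Kato--Ponce commutator estimate gives a one-line alternative. What your argument buys over the paper's citation is transparency about where each hypothesis enters in the periodic setting; what the citation buys is brevity and a statement already packaged for the full range of $\tilde s,\tilde t$.
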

Now, we define a bounded linear operator:
\begin{lemma}
The operator
$$B(u)=\Lambda^{s}(u\partial_x+\partial_x)\Lambda^{-s}-(u\partial_x+\partial_x)=[\Lambda^{s},u\partial_x+\partial_x]\Lambda^{-s}$$
is bounded in $L^{2}[0,1]$ for $u\in H_p^s$ with $s>3/2$.
\end{lemma}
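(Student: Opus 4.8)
The plan is to reduce the statement to the commutator estimate of Lemma \ref{Mlt}. On spatially periodic functions both $\partial_x$ and $\Lambda^s=(1-\partial_x^2)^{s/2}$ are Fourier multipliers (symbols $2\pi i k$ and $(1+4\pi^2k^2)^{s/2}$, $k\in\mathbb{Z}$), so they commute: $[\Lambda^s,\partial_x]=0$. Hence the constant-coefficient transport term drops out of $B(u)$ entirely, and, writing $M_u$ for multiplication by $u$ and pulling $\partial_x$ out of the commutator via $\partial_x\Lambda^s=\Lambda^s\partial_x$,
\[
B(u)=[\Lambda^s,u\partial_x+\partial_x]\Lambda^{-s}=[\Lambda^s,M_u\partial_x]\Lambda^{-s}=[\Lambda^s,M_u]\,\partial_x\Lambda^{-s}.
\]

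Next I would factor the order-$(1-s)$ multiplier as $\partial_x\Lambda^{-s}=\Lambda^{1-s}\bigl(\partial_x\Lambda^{-1}\bigr)$ — legitimate since all factors are commuting Fourier multipliers — so that
\[
B(u)=\bigl([\Lambda^s,M_u]\,\Lambda^{1-s}\bigr)\bigl(\partial_x\Lambda^{-1}\bigr).
\]
The second factor $\partial_x\Lambda^{-1}$ has symbol $2\pi i k/(1+4\pi^2k^2)^{1/2}$, of modulus at most $1$, so it is bounded on $L^2[0,1]$ with operator norm $\leq 1$. For the first factor I apply Lemma \ref{Mlt} with $f=u$, $\tilde s=0$ and $\tilde t=s-1$, so that $\tilde s+\tilde t+1=s$ and the constraints $|\tilde s|,|\tilde t|\leq s-1$ are satisfied (using $s>3/2\geq 1$); this gives
\[
\|[\Lambda^s,M_u]\,\Lambda^{1-s}\omega\|_{L^2[0,1]}\leq c\,\|u\|_{H_p^s}\,\|\omega\|_{L^2[0,1]},\qquad \omega\in L^2[0,1].
\]
Composing the two estimates yields $\|B(u)\omega\|_{L^2[0,1]}\leq c\,\|u\|_{H_p^s}\,\|\omega\|_{L^2[0,1]}$, which is the asserted boundedness.

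The argument is essentially bookkeeping of operator orders; the only point needing care is that the appeal to Lemma \ref{Mlt} lands exactly on the boundary of the admissible range ($\tilde t=s-1$), which is allowed, and that the commutator must be placed between $\Lambda^0$ on the left and $\Lambda^{1-s}$ on the right — any other splitting would force an unbounded positive power of $\Lambda$ to act first. I do not expect a genuine obstacle beyond keeping this ordering straight. The same computation applied to $u-v$ in place of $u$ gives $B(u)-B(v)=[\Lambda^s,M_{u-v}]\Lambda^{1-s}(\partial_x\Lambda^{-1})$ with the bound $c\,\|u-v\|_{H_p^s}\|\omega\|_{L^2[0,1]}$, which on a ball $\|u\|_{H_p^s},\|v\|_{H_p^s}\leq C$ yields assumption (A3) with $c_2(C)=2cC$.
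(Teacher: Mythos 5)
Your proof is correct and follows essentially the same route as the paper: both reduce $B(u)$ to $[\Lambda^s,M_u]\Lambda^{1-s}$ composed with the order-zero multiplier $\partial_x\Lambda^{-1}$ and then invoke Lemma \ref{Mlt} with $\tilde s=0$, $\tilde t=s-1$. Your version is in fact slightly more careful than the paper's, which asserts $\|\Lambda^{-1}\partial_x\omega\|_{L^2[0,1]}=\|\omega\|_{L^2[0,1]}$ where only $\leq$ holds.
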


\begin{proof}
Note that
$$
\Lambda^{s}(u\partial_x+\partial_x)\Lambda^{-s}-(u\partial_x+\partial_x)
=\Lambda^{s}u\partial_x\Lambda^{-s}+\Lambda^{s}\partial_x\Lambda^{-s}-(u\partial_x+\partial_x)=[\Lambda^{s},u\partial_x]\Lambda^{-s}
$$
since $\partial_x$ and $\Lambda$ commute. Moreover, we have
$[\Lambda^{s},u\partial_x]\Lambda^{-s}=[\Lambda^{s},u]\Lambda^{-s}\partial_x$, so that
\begin{eqnarray}\label{map}
||B(u)\omega||_{L^2[0,1]} &= ||[\Lambda^{s},u]\Lambda^{-s}\partial_x\omega||_{L^2[0,1]}
= ||[\Lambda^{s},u]\Lambda^{1-s}\Lambda^{-1}\partial_x\omega||_{L^2[0,1]} \nonumber\\
&\leq ||u||_{H_p^s}||\Lambda^{-1}\partial_x\omega||_{L^2[0,1]}=||u||_{H_p^s}||\omega||_{L^2[0,1]}.
\end{eqnarray}
in view of Lemma \ref{Mlt} for $\tilde{s}=0$ and $\tilde {t}=s-1$:
\end{proof}

\begin{remark}
If we replace $u$ with $u-v$ in (\ref{map}), it can be easily observed that
\begin{displaymath}
||B(u)-B(v)\omega||_{L^2[0,1]}\leq ||\omega||_{L^2[0,1]}||u-v||_{H_p^s},
\end{displaymath}
which proves (A3).
\end{remark}

\begin{lemma}
Let $f(u)$ be given by (\ref{f}). Then:
\begin{itemize}
\item[(i)]$||f(u)||_{H_p^s}\leq M$ for some constant $M$ depending on $||u||_{H_p^s}$.
\item[(ii)]$||f(u)-f(v)||_{L^2[0,1]}\leq c_3||u-v||_{L^2[0,1]}$.
\item[(iii)]$||f(u)-f(v)||_{H_p^s}\leq c_4||u-v||_{H_p^s}$, $s>3/2$.
\end{itemize}
\end{lemma}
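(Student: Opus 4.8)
The plan is to exploit the special form of $f$. Writing $A_\mu:=1-\tfrac{\mu}{12}\partial_x^{2}$, we have $f(u)=-A_\mu^{-1}\partial_x\,P(u)$ with
\[
P(u):=2u+\tfrac52\varepsilon u^{2}-\tfrac18\varepsilon^{2}u^{3}+\tfrac3{64}\varepsilon^{3}u^{4}-\tfrac7{48}\varepsilon\mu\,u_x^{2}.
\]
The operator $A_\mu^{-1}\partial_x$ is the Fourier multiplier with symbol $i\xi/(1+\tfrac{\mu}{12}\xi^{2})$, hence bounded from $H_p^{\sigma}$ into $H_p^{\sigma+1}$ for every $\sigma$; in particular it is bounded $H_p^{s-1}\to H_p^{s}$, $L^{2}\to H_p^{1}\hookrightarrow L^{2}$, and $H_p^{-1}\to L^{2}$. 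Consequently (i) reduces to $\|P(u)\|_{H_p^{s-1}}\le M(\|u\|_{H_p^{s}})$ and (iii) to $\|P(u)-P(v)\|_{H_p^{s-1}}\le c\,\|u-v\|_{H_p^{s}}$ (the constants in (ii) and (iii) being allowed to depend on an a priori bound for $\|u\|_{H_p^{s}}$ and $\|v\|_{H_p^{s}}$), while for (ii) the non-derivative part of the difference will be handled through $A_\mu^{-1}\partial_x\colon L^{2}\to L^{2}$ and the $u_x^{2}$ part requires the extra smoothing discussed below.

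Parts (i) and (iii) are routine bookkeeping of products. Since $s>\tfrac32$, both $H_p^{s}$ and $H_p^{s-1}$ are Banach algebras; for (i) this gives $\|u^{k}\|_{H_p^{s-1}}\le C\|u\|_{H_p^{s}}^{k}$ and $\|u_x^{2}\|_{H_p^{s-1}}\le C\|u_x\|_{H_p^{s-1}}^{2}\le C\|u\|_{H_p^{s}}^{2}$, so that $\|P(u)\|_{H_p^{s-1}}$ is bounded by a fourth-degree polynomial in $\|u\|_{H_p^{s}}$. For (iii) I would factor the differences of powers, $u^{k}-v^{k}=(u-v)\sum_{j=0}^{k-1}u^{j}v^{k-1-j}$ and $u_x^{2}-v_x^{2}=(u_x+v_x)(u_x-v_x)$, so that $P(u)-P(v)=Q(u,v)(u-v)+R(u,v)(u_x-v_x)$ with $Q$ a polynomial in $u,v$ (hence in $H_p^{s}\subset H_p^{s-1}$) and $R=-\tfrac7{48}\varepsilon\mu(u_x+v_x)\in H_p^{s-1}$; the algebra property then gives $\|P(u)-P(v)\|_{H_p^{s-1}}\le C\big(\|u-v\|_{H_p^{s-1}}+\|u_x-v_x\|_{H_p^{s-1}}\big)\le c_4\,\|u-v\|_{H_p^{s}}$, with $C$ depending on $\|u\|_{H_p^{s}}$ and $\|v\|_{H_p^{s}}$.

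The delicate point, which I expect to be the main obstacle, is the $u_x^{2}$ term in the $L^{2}$-Lipschitz estimate (ii). The non-derivative terms are harmless: their difference factors as $Q(u,v)(u-v)$ with $Q\in H_p^{s}\hookrightarrow L^{\infty}$, so $\|Q(u,v)(u-v)\|_{L^{2}}\le C\|u-v\|_{L^{2}}$ and $A_\mu^{-1}\partial_x$ is bounded on $L^{2}$. For the term $-\tfrac7{48}\varepsilon\mu\,A_\mu^{-1}\partial_x(u_x^{2}-v_x^{2})$ one cannot afford to lose a derivative, so I would exploit that $A_\mu^{-1}\partial_x^{2}=\tfrac{12}{\mu}(A_\mu^{-1}-I)$ is bounded on $L^{2}$. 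Integrating by parts, $u_x^{2}-v_x^{2}=(u_x+v_x)\partial_x(u-v)=\partial_x\big((u_x+v_x)(u-v)\big)-(u_{xx}+v_{xx})(u-v)$, and therefore
\[
A_\mu^{-1}\partial_x(u_x^{2}-v_x^{2})=\tfrac{12}{\mu}(A_\mu^{-1}-I)\big((u_x+v_x)(u-v)\big)-A_\mu^{-1}\partial_x\big((u_{xx}+v_{xx})(u-v)\big).
\]
The first term is bounded in $L^{2}$ by $C\|u_x+v_x\|_{L^{\infty}}\|u-v\|_{L^{2}}\le C\|u-v\|_{L^{2}}$. For the second I would use the smoothing $A_\mu^{-1}\partial_x\colon H_p^{-1}\to L^{2}$ and estimate $\|(u_{xx}+v_{xx})(u-v)\|_{H_p^{-1}}$ via a Sobolev multiplication estimate, the factor $u_{xx}+v_{xx}\in H_p^{s-2}$ being paired against $u-v\in L^{2}$; this is exactly the step where the regularity $s>\tfrac32$ is used, and carrying it out carefully in the scale of negative-order periodic Sobolev spaces is, I expect, the technical heart of the argument. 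Adding the three contributions then gives $\|f(u)-f(v)\|_{L^{2}}\le c_3\,\|u-v\|_{L^{2}}$, and together with (i) and (iii) this verifies (A4).
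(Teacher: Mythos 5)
Your treatment of (i), (iii) and of the polynomial terms in (ii) is correct and essentially identical to the paper's: both rest on the one-derivative smoothing of $(1-\tfrac{\mu}{12}\partial_x^2)^{-1}\partial_x$, the algebra property of $H_p^{s-1}$ for $s>3/2$, and the factorisation of $u^k-v^k$. The gap sits exactly where you flagged it, in the $u_x^2$ contribution to the $L^2$-Lipschitz bound (ii), and your proposed mechanism does not close it for $3/2<s<2$. After your integration by parts you must estimate $\|(u_{xx}+v_{xx})(u-v)\|_{H_p^{-1}}$ by pairing $u_{xx}+v_{xx}\in H_p^{s-2}$ against $u-v$ measured only in $L^2$; since $s-2\in(-1/2,0)$ in this range, the sum of the two regularities is negative and no bilinear estimate $\|ab\|_{H_p^{-1}}\leq C\|a\|_{H_p^{s-2}}\|b\|_{L^2}$ is available. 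Indeed, testing $(u_{xx})(u-v)$ against $\phi\in H_p^1$ and undoing the integration by parts regenerates $\langle u_x\,\partial_x(u-v),\phi\rangle$ with its full derivative on $u-v$, so the manipulation has only moved the loss of a derivative around, not removed it. Your route does work for $s\geq 2$, where $u_{xx}\in L^2$ and $\|(u_{xx}+v_{xx})(u-v)\|_{L^1}\leq\|u_{xx}+v_{xx}\|_{L^2}\|u-v\|_{L^2}$ with $L^1[0,1]\hookrightarrow H_p^{-1/2-\epsilon}$; but the lemma is claimed for all $s>3/2$.

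This is moreover not a defect of your particular decomposition: the estimate required for (ii) in the range $3/2<s<2$, namely $\|(1-\tfrac{\mu}{12}\partial_x^2)^{-1}\partial_x(u_x^2-v_x^2)\|_{L^2}\leq C(R)\|u-v\|_{L^2}$ for $\|u\|_{H_p^s},\|v\|_{H_p^s}\leq R$, is false. Take $v=0$ and $u_N(x)=N^{-s}\bigl(\cos(2\pi Nx)+\cos(2\pi(N+1)x)\bigr)$: then $\|u_N\|_{H_p^s}=O(1)$ and $\|u_N\|_{L^2}\sim N^{-s}$, while the cross term in $(\partial_x u_N)^2$ places a Fourier coefficient of size $\sim N^{2-2s}$ at frequency $1$, where the symbol of $(1-\tfrac{\mu}{12}\partial_x^2)^{-1}\partial_x$ is of order one; hence the Lipschitz quotient is $\gtrsim N^{2-2s}/N^{-s}=N^{2-s}\to\infty$. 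The paper's own proof disposes of this term with the single line $\|\partial_x(u-v)\partial_x(u+v)\|_{H_p^{-1}}\leq\|u+v\|_{H_p^s}\|u-v\|_{L^2}$, which is subject to the same objection and is justified only for $s\geq 2$ (where multiplication by $\partial_x(u+v)\in H_p^1$ is bounded on $H_p^{-1}$ by duality from the algebra property of $H_p^1$). So you have correctly located the crux of the lemma, but neither your argument nor the paper's establishes (ii) on the full range $s>3/2$; as written, both prove it only for $s\geq 2$.
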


\begin{proof}
Observe that
\begin{displaymath}
f(u)= -(1-\frac{\mu}{12}\partial_x^2)^{-1}\partial_x (g(u))=-\partial_x (P*(g(u)))
\end{displaymath}
where * denotes the convolution and $P(x)$ stands for the Green's function of the operator \Big($1-\frac{\mu}{12}\partial_x^2$\Big) in the periodic case. Therefore,
\begin{align*}
||f(u)-f(v)||_{L^2[0,1]} =& ||-\partial_x (P*(g(u)-g(v)))||_{L^2[0,1]}\\
\leq& ||(u-v)||_{L^2[0,1]}+||(u-v)(u+v)||_{L^2[0,1]}\\
&+||(u-v)(u^2+uv+v^2)||_{L^2[0,1]}\\
&+||(u-v)(u+v)(u^2+v^2)||_{L^2[0,1]}\\
&+||\partial_x(u-v)\partial_x(u+v)||_{H_p^{-1}}
\end{align*}
Using the imbedding property of Sobolev spaces $H^s(\mathbb{R})$, i.e. $||.||_{s_1}\leq ||.||_{s_2}$ if $s_1\leq s_2$, Cauchy-Schwartz inequality; and Sobolev embedding theorem, we obtain
\begin{align*}
||f(u)-f(v)||_{L^2[0,1]} 
            \leq& ||(u-v)||_{L^2[0,1]}+||(u+v)||_{s}||(u-v)||_{L^2[0,1]}\\
            &+||(u^2+uv+v^2)||_{H_p^{s}}||(u-v)||_{L^2[0,1]}\\
            &+||(u+v)(u^2+v^2)||_{H_p^{s}}||(u-v)||_{L^2[0,1]}\\
            &+||(u+v)||_{H_p^{s}}||(u-v)||_{L^2[0,1]}\\
            \leq& c_3 ||u-v||_{L^2[0,1]}
\end{align*}
where $c_3$ is a constant depending on $||u||_{H_p^{s}}$ and $||v||_{H_p^{s}}$. This proves (ii). To prove (iii), we have the following estimates:
\begin{align*}
||f(u)-f(v)||_{H_p^s} =& ||(1-\frac{\mu}{12}\partial_x^2)^{-1}\partial_x(g(u)-g(v))||_{H_p^s}\\
                  \leq& ||(u-v)||_{H_p^{s-1}}+||(u-v)(u+v)||_{H_p^{s-1}}\\
                  &+||(u-v)(u^2+uv+v^2)||_{H_p^{s-1}}\\
                  &+||(u-v)(u+v)(u^2+v^2)||_{H_p^{s-1}}\\
                  &+||\partial_x(u-v)\partial_x(u+v)||_{H_p^{s-1}}\\
                  \leq& ||(u-v)||_{H_p^s}+||(u-v)||_{H_p^s}||(u+v)||_{H_p^s}\\
                  &+||(u-v)||_{H_p^s}||(u^2+uv+v^2)||_{H_p^s}\\
                  &+||(u-v)||_{H_p^s}||(u+v)(u^2+v^2)||_{H_p^s}\\
                  &+||\partial_x(u-v)||_{H_p^{s-1}}||\partial_x(u+v)||_{H_p^{s-1}}\\
                  \leq& c_4 ||u-v||_{H_p^s}
\end{align*}
where $c_4$ is also a constant depending on $||u||_{H_p^s}$ and $||v||_{H_p^s}$. Since we choose $u_0\in H_p^s$, this estimate actually proves continuous dependence on the initial data. Note that (i) can be obtained from (iii) by choosing $v=0$. Hence, the estimates for (A4)are satisfied.
\end{proof}
\noindent
 \title{\textbf{Proof of Proposition \ref{Lwp}}}
Since we have $u_0\in H_p^s$, $s>3/2$, and the assumptions (A1)-(A4) hold for $X=L^2[0,1]$ and $Y=H_p^s$, $s>3/2$, we get $ u\in C([0,T), H_p^s)\cap C^1([0,T),L^{2}[0,1])$ in view of Theorem \ref{kato}.

\qed

\begin{remark}
\label{regularity}
In view of equation (\ref{SG}), we also have more regular solutions $ u\in C([0,T), H_p^s)\cap C^1([0,T),H_p^{s-1}])$, $s>3/2$.
\end{remark}
 \section{Wave Breaking}
 
 \subsection{Existence of breaking waves}
 
In the following, we deduce that for solutions of the evolution equation for surface waves, 
  \begin{align}
    \label{MAE}
  & u_t + u_x + \frac{3}{2}\varepsilon u u_x -\frac{3}{8}\varepsilon^{2} u^2u_x + \frac{3}{16}\varepsilon^{3} u^3u_x+\frac{\mu}{12}(u_{xxx}-u_{xxt}) \nonumber\\
  & +\frac{7\varepsilon}{24}\mu (uu_{xxx} +2 u_xu_{xx})=0,
  \end{align} 
singularities can occur in finite time only in the form of wave breaking, more specifically surging breakers. In other words, there exists a breaking time for the solution where the slope of the wave becomes infinite despite the fact that the wave remains bounded.

\begin{proposition}
\label{brk}
 If, for some initial data $u_0\in H_p^2$, the maximal existence time $T>0$ of the periodic solution to (\ref{MAE}) is finite, then the solution $u(x,t)\in C([0,T), H_p^2)\cap C^1([0,T),L^{2}[0,1])$ has the property that
 \begin{equation}
  \label{sup}
  \sup_{t\in[0,T),~x\in[0,1]}\{|u(x,t)|\}<\infty
 \end{equation}
whereas 
\begin{equation}
\label{slope}
 \limsup_{t\uparrow T}\{u_x(x,t)\}=+\infty.
\end{equation}

\end{proposition}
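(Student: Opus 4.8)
The plan is to treat the two conclusions separately, the first by exhibiting a conserved energy and the second by a Riccati‑type estimate along characteristics combined with a continuation criterion.

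For (\ref{sup}) I would first put the equation in conservation form. Using the identities $uu_{xxx}+2u_xu_{xx}=\partial_x\bigl(uu_{xx}+\tfrac12 u_x^2\bigr)$ and $u^{k}u_x=\tfrac{1}{k+1}\partial_x(u^{k+1})$, equation (\ref{MAE}) becomes $\partial_t\bigl(u-\tfrac{\mu}{12}u_{xx}\bigr)+\partial_x F(u)=0$ with $F(u)=u+\tfrac34\varepsilon u^2-\tfrac18\varepsilon^2u^3+\tfrac{3}{64}\varepsilon^3u^4+\tfrac{\mu}{12}u_{xx}+\tfrac{7\varepsilon}{24}\mu\,uu_{xx}+\tfrac{7\varepsilon}{48}\mu\,u_x^2$. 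Pairing this in $L^2[0,1]$ with $u$, the polynomial terms and $\langle u_{xx},u_x\rangle$ integrate to zero by periodicity, while the remaining two contributions $\tfrac{7\varepsilon}{24}\mu\,\langle uu_{xx},u_x\rangle$ and $\tfrac{7\varepsilon}{48}\mu\,\langle u_x^2,u_x\rangle$ are both multiples of $\int_0^1 u_x^3\,dx$ and cancel. Hence $E(u(t)):=\|u(t)\|_{L^2[0,1]}^2+\tfrac{\mu}{12}\|u_x(t)\|_{L^2[0,1]}^2$ is constant in $t$ (the computation being legitimate for $u\in C([0,T),H_p^2)\cap C^1([0,T),H_p^1)$ in view of Remark \ref{regularity}). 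Then $\|u(t)\|_{H_p^1}^2\le\max\{1,12/\mu\}\,E(u_0)$ on $[0,T)$, and the Sobolev embedding $H_p^1\hookrightarrow L^\infty[0,1]$ yields (\ref{sup}).

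For (\ref{slope}) I would study $v:=u_x$ along characteristics. Let $q=q(t,x)$ solve $\dot q=-(1+\tfrac72\varepsilon u(q,t))$, $q(0,x)=x$; since $u\in C([0,T),H_p^2)\subset C([0,T),C^1)$ this is a flow of circle diffeomorphisms on $[0,T)$. Differentiating (\ref{SG}) in $x$ gives $v_t-(1+\tfrac72\varepsilon u)v_x=\tfrac72\varepsilon v^2+\partial_x f(u)$. The decisive computation is to simplify $\partial_x f(u)$: using $(1-\tfrac{\mu}{12}\partial_x^2)^{-1}\partial_x^2=-\tfrac{12}{\mu}I+\tfrac{12}{\mu}(1-\tfrac{\mu}{12}\partial_x^2)^{-1}$ in (\ref{f}), one gets $\partial_x f(u)=-\tfrac74\varepsilon u_x^2+R(u)$, where $R(u)$ consists only of a polynomial in $u$ together with $(1-\tfrac{\mu}{12}\partial_x^2)^{-1}$ applied to a polynomial in $u$ and $u_x^2$; by the bounds on $\|u\|_{L^\infty}$ and $\|u\|_{H_p^1}$ from the previous step and the boundedness of the periodic Green's function $P$, one has $\|R(u(t))\|_{L^\infty[0,1]}\le C_0$ for all $t\in[0,T)$. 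Consequently, along every characteristic, $\tfrac{d}{dt}v(q(t,x),t)=\tfrac74\varepsilon\,v(q(t,x),t)^2+R(u)(q(t,x),t)$ with $|R|\le C_0$. Since the coefficient $\tfrac74\varepsilon$ is \emph{positive}, the right‑hand side is $\ge 0$ whenever $v\le-\sqrt{4C_0/(7\varepsilon)}$, so $v$ can never descend below $-M_1:=-\max\{\|\partial_x u_0\|_{L^\infty[0,1]},\sqrt{4C_0/(7\varepsilon)}\}$; as every point of $[0,1]\times[0,T)$ lies on one of these characteristics, $\inf_{x\in[0,1]}u_x(x,t)\ge -M_1$ for all $t\in[0,T)$. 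Now suppose for contradiction that $\limsup_{t\uparrow T}\sup_{x\in[0,1]}u_x(x,t)=:K<\infty$; together with this lower bound, $\|u_x(t)\|_{L^\infty[0,1]}$ is bounded on $[0,T)$. A standard $H^2$ energy estimate — transporting $w:=u_{xx}$, whose equation is $w_t-(1+\tfrac72\varepsilon u)w_x=\tfrac{21}{2}\varepsilon u_x w+\partial_x^2 f(u)$ with $\partial_x^2 f(u)=-\tfrac72\varepsilon u_x w+(\text{a term bounded in }L^2[0,1]\text{ in terms of }\|u\|_{L^\infty},\|u_x\|_{L^\infty})$, then pairing with $w$ and integrating by parts — yields $\tfrac{d}{dt}\|u_{xx}(t)\|_{L^2[0,1]}^2\le C\bigl(1+\|u(t)\|_{L^\infty[0,1]}+\|u_x(t)\|_{L^\infty[0,1]}\bigr)\|u_{xx}(t)\|_{L^2[0,1]}^2+C'$. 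By Gronwall $\|u(t)\|_{H_p^2}$ stays bounded as $t\uparrow T$, so by Proposition \ref{Lwp} — whose existence time depends only on the $H_p^s$‑norm of the data — the solution extends beyond $T$, contradicting maximality of $T$. Hence no such $K$ exists, which is (\ref{slope}).

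The hard part will be the simplification of $\partial_x f(u)$ and, above all, keeping track of the exact constant: one must check that the $-\tfrac74\varepsilon u_x^2$ produced by the nonlocal term $f(u)$ only \emph{partially} cancels the $+\tfrac72\varepsilon u_x^2$ coming from the quasi‑linear transport term, leaving the net Riccati coefficient $\tfrac74\varepsilon$ \emph{positive} — it is precisely this sign that forces the blow‑up to occur through $\sup_x u_x\to+\infty$ (a surging breaker) rather than $\inf_x u_x\to-\infty$ as for the Camassa--Holm equation. A secondary, routine point is that the $H^2$ energy estimate above is a priori only formal for merely $H_p^2$ solutions and should be made rigorous by the usual mollification/approximation argument.
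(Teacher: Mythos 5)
Your proof of (\ref{sup}) is the same as the paper's: both exhibit the conserved energy $E(u)=\tfrac12\int_0^1(u^2+\tfrac{\mu}{12}u_x^2)\,dx$ and invoke $H_p^1\hookrightarrow L^\infty[0,1]$ (you supply the cancellation $\int_0^1 u_x^3\,dx$ explicitly, which the paper leaves implicit). For (\ref{slope}) you take a genuinely different route. The paper argues by contraposition directly at the level of the $H_p^2$ energy: multiplying (\ref{MAE}) by $u_{xx}$ and integrating, the only dangerous term is $\tfrac{7\varepsilon\mu}{16}\int_0^1 u_x u_{xx}^2\,dx$, whose coefficient is \emph{positive}, so the one-sided hypothesis $u_x\le M_1$ alone closes the Gronwall estimate for $\|u\|_{H_p^2}$; hence finite $T$ forces $u_x$ to be unbounded from above. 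You instead first derive an \emph{unconditional} lower bound on $u_x$ from the Riccati equation $\tfrac{d}{dt}v=\tfrac74\varepsilon v^2+R$ along characteristics (your bookkeeping of the net coefficient $\tfrac72\varepsilon-\tfrac74\varepsilon=\tfrac74\varepsilon>0$ and the $L^\infty$ bound on $R$ via $\|P\|_{L^\infty}\|u_x\|_{L^2}^2$ are both correct and consistent with the paper's equation (\ref{diff})), and then close a two-sided $H^2$ estimate. Your version buys extra information — the a priori lower bound on $u_x$, which is the real reason the breaker is "surging" and which the paper only extracts later, in the $H_p^3$ blow-up proposition — at the cost of an extra step that is not logically needed: as your own computation shows, the term $\tfrac{21}{4}\varepsilon\int u_x u_{xx}^2$ in your $w$-estimate also has a positive coefficient, so the assumed upper bound $K$ alone would suffice, exactly as in the paper.

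One technical point deserves more care than your closing remark gives it. For $u_0\in H_p^2$ the transport identity $\tfrac{d}{dt}v(q(t,x),t)=v_t+\dot q\,v_x$ requires evaluating $v_x=u_{xx}$ pointwise along the flow, and $u_{xx}(\cdot,t)$ is only an $L^2$ function; this is precisely why the paper postpones all pointwise arguments to the blow-up proposition, where it assumes $u_0\in H_p^3$ (so that $u_{xx}(\xi(t),t)=0$ at the extremum) and otherwise relies on Theorem \ref{supremum}. Your characteristics step therefore needs either an approximation by $H_p^3$ data (with constants depending only on the conserved $H_p^1$ energy, so the bound survives the limit) or a reformulation via the a.e.\ differentiability of $m(t)=\inf_x u_x(x,t)$. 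Since that step is dispensable anyway, this is a fixable blemish rather than a gap, but as written the proof of the lower bound is only formal at the stated regularity.
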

\begin{proof}
Multiplying equation (\ref{MAE}) by $u$ and integrating over $[0,1]$, we find that 
\begin{equation}
  \label{E}
    E(u)= \frac{1}{2}\int_{0}^{1} \Big(u^2 +\frac{\mu}{12} u_x^2\Big) dx
  \end{equation}
is a conserved quantity of (\ref{MAE}). Hence, the boundedness (\ref{sup}) of the solution $u$ is ensured by $E(u)$ and the imbedding $H_p^1\subset L^{\infty}[0,1]$. In view of Proposition \ref{Lwp}, and using the fact that $H_p^2$ is dense in $H_p^s$ for $3/2<s<2$, a periodic solution to (\ref{MAE}) has a finite maximal existence time if and only if $||u||_{H_p^2}$ blows up in finite time. Therefore, to prove (\ref{slope}) and conclude that the maximal existence time is finite, it is sufficient to show that if we can find a constant $M_1=M_1(u_0)$ such that 
\begin{equation}
\label{bdd}
 u_x(x,t)\leq M_1,~~~x\in[0,1]
\end{equation}
as long as the solution is defined, $||u||_{H_p^2}$ will remain bounded for finite time. For that purpose, we multiply (\ref{MAE}) by $u_{xx}$ and integrate over $[0,1]$ to get
\begin{align*}
 \int_0^1 (u_t u_{xx}+ u_xu_{xx} + \frac{3}{2}\varepsilon u u_xu_{xx} -\frac{3}{8}\varepsilon^{2} u^2u_xu_{xx} + \frac{3}{16}\varepsilon^{3} u^3u_xu_{xx}\\
 +\frac{\mu}{12}(u_{xx}u_{xxx}-u_{xx}u_{xxt})+\frac{7\varepsilon}{24}\mu (uu_{xx}u_{xxx} +2 u_xu^2_{xx}))dx=0.
\end{align*}
Taking advantage of the periodicity, 
\begin{align*}
 \int_0^1 (u_xu_{xx}+\frac{\mu}{12}u_{xx}u_{xxx})dx=\frac{1}{2}\int_0^1 ((u_x^2)_x+\frac{\mu}{12}(u_{xx}^2)_x)dx=0.
\end{align*}
Moreover, integration by parts gives
\begin{align*}
 \frac{1}{2}\partial_t\Big(\int_0^1 (u_{x}^{2}+\frac{\mu}{12}u_{xx}^2)dx\Big)
 =\int_0^1 &\Big(\frac{3}{2}\varepsilon u u_xu_{xx} -\frac{3}{8}\varepsilon^{2} u^2u_xu_{xx} + \frac{3}{16}\varepsilon^{3} u^3u_xu_{xx}\\
 &+\frac{7\varepsilon}{24}\mu (uu_{xx}u_{xxx} +2 u_xu_{xx}^2)\Big)dx.
\end{align*}
Now, choose $M_0$ such that 
\begin{equation*}
 |u(x,t)|\leq M_0,~~~x\in[0,1],
\end{equation*}
and assume that (\ref{bdd}) holds. By Cauchy-Schwarz and Young's inequalities,
\begin{align*}
 \frac{1}{2}\partial_t\Big(\int_0^1 (u_{x}^{2}+\frac{\mu}{12}u_{xx}^2)dx\Big)
&\leq\Big(\frac{3}{4}\varepsilon M_0+\frac{3}{16}\varepsilon^{2}M_0^2+\frac{3}{32}\varepsilon^{3}M_0^3\Big)\int_0^1 u_x^2 dx\\
&+\Big(\frac{3}{4}\varepsilon M_0+\frac{3}{16}\varepsilon^{2}M_0^2+\frac{3}{32}\varepsilon^{3}M_0^3+\frac{21}{48}\varepsilon\mu M_1\Big)\int_0^1 u_{xx}^2 dx.
\end{align*}
Define 
\begin{equation}
\label{H}
H(t):=\frac{1}{2}\int_0^1\Big(u^2+\frac{\mu}{6}u_{x}^{2}+\frac{\mu^2}{144}u_{xx}^2 \Big)dx,
\end{equation}
and observe that
\begin{align*}
\partial_t H(t)=&\frac{1}{2}\partial_t\Big(\int_0^1 (u^{2}+\frac{\mu}{12}u_{x}^2)dx\Big)+ \frac{\mu}{24}\partial_t\Big(\int_0^1 (u_{x}^{2}+\frac{\mu}{12}u_{xx}^2)dx\Big)\\
 \leq&\frac{\mu}{12}\Big(\frac{6}{\mu}(\frac{3}{4}\varepsilon M_0+\frac{3}{16}\varepsilon^{2}M_0^2+\frac{3}{32}\varepsilon^{3}M_0^3)\\
 &+\frac{144}{\mu^2}(\frac{3}{4}\varepsilon M_0+\frac{3}{16}\varepsilon^{2}M_0^2+\frac{3}{32}\varepsilon^{3}M_0^3+\frac{21}{48}\varepsilon\mu M_1)\Big)H(t).
\end{align*}
In view of Gronwall's inequality, $||u||_{H_p^2}$ does not blow up in finite time under the assumption (\ref{bdd}).
\end{proof}
\subsection{Blow-up Result}

Notice that (\ref{MAE}) is a quasi-linear evolution equation which may be rewritten in the form 
   \begin{align}
    \label{QL}
  & u_t - u_x - \frac{7}{2}\varepsilon u u_x \nonumber\\
  &+\Big(1-\frac{\mu}{12}\partial_x^2\Big)^{-1}\partial_x\Big(2u+\frac{5}{2}\varepsilon u^2-\frac{1}{8}\varepsilon^{2} u^3+ \frac{3}{64}\varepsilon^{3} u^4-\frac{7}{48}\varepsilon\mu u_x^2\Big)\nonumber\\
  &=u_t - u_x - \frac{7}{2}\varepsilon u u_x+\partial_x(P*g(u))=0,
  \end{align}
where $P(x)$ is the Green function of the operator $\Big(1-\frac{\mu}{12}\partial_x^2\Big)$ in the periodic case. It is given by 
\begin{equation}
\label{kernel}
 P(x)=\sqrt{\frac{3}{\mu}}\frac{e^{2\sqrt{\frac{3}{\mu}}(x-[x])}+e^{2\sqrt{\frac{3}{\mu}}(1-(x-[x])}}{e^{2\sqrt{\frac{3}{\mu}}}-1}, ~~~x\in\mathbb{R},
\end{equation}
with

\begin{displaymath}
\Vert P(x)\Vert_{L^{1}[0,1]}=\frac{\mu}{3}=:n_1,
\end{displaymath}

\begin{displaymath}
\Vert P(x)\Vert_{L^{2}[0,1]}=(\frac{3}{4\mu})^{1/4}\Big(\frac{e^{4\sqrt{\frac{3}{\mu}}}+4\sqrt{\frac{3}{\mu}}e^{2\sqrt{\frac{3}{\mu}}}-1}{e^{4\sqrt{\frac{3}{\mu}}}-2e^{2\sqrt{\frac{3}{\mu}}}+1}\Big)^{1/2}=:n_2,
\end{displaymath}
and
\begin{displaymath}
\Vert P(x)\Vert_{L^{\infty}[0,1]}=\sqrt{\frac{3}{\mu}}\Big(\frac{e^{2\sqrt{\frac{3}{\mu}}}+1}{e^{2\sqrt{\frac{3}{\mu}}}-1}\Big)=:n_{\infty}
\end{displaymath}
where $n_1,~n_2,~n_{\infty}$ are real constants which correspond to the finite norms of $P(x)$. The evaluation of the kernel (\ref{kernel}) relies on ordinary differential equation theory. Consider
\begin{equation}
\label{ODE}
(1-\frac{\mu}{12}\partial_x^2)u(x)=f(x).
\end{equation}
In general, the solution of (\ref{ODE}) is given as
\begin{align*}
    u(x)&=u_{homogenous}+u_{particular}\\
	&=c_1e^{2\sqrt{\frac{3}{\mu}}x}+c_2e^{-2\sqrt{\frac{3}{\mu}}x}+\frac{3}{\mu}\int_\mathbb{R}e^{-2\sqrt{\frac{3}{\mu}}|x-y|}f(y)dy.
\end{align*} 
where $c_1$ and $c_2$ are real constants. Note that $\{\cosh(2\sqrt{\frac{3}{\mu}}x),\sinh(2\sqrt{\frac{3}{\mu}}x)\}$ is equivalent to the solution set used for the homogenous part of the solution. Moreover, without loss of generality, $f(x)$ can be assumed to have value zero outside the interval $[0,1]$, since we work on a periodic problem. Hence, the solution can be given as

\begin{eqnarray}
\label{solution}
u(x)=c_1\cosh\Big(2\sqrt{\frac{3}{\mu}}x\Big)+c_2\sinh\Big(2\sqrt{\frac{3}{\mu}}x\Big)+\frac{3}{\mu}\int_0^1 e^{-2\sqrt{\frac{3}{\mu}}|x-y|}f(y)dy.
\end{eqnarray} 

\noindent
To determine the unknown constants $c_1$ and $c_2$, we first take into account periodicity and evaluate $u(0)$ and $u(1)$, since they are the equal boundary values for equation (\ref{ODE}) in $[0,1]$. Following the same procedure for $u_x$ and equating $u_x(0)$ and $u_x(1)$, we get two equalities involving the constants $c_1$, $c_2$ and $f(x)$. Then, we solve these two equalities for $c_1$ and $c_2$, and find their expressions depending on $f$. Plugging the resulting values in (\ref{solution}), and collecting the coefficients of $f$ finally gives the kernel $P(x)$.

Now, we will ensure that some wave solutions to (\ref{QL}) blow up in finite time in the form of surging breakers given as in Proposition \ref{brk}. Our key theorem which guarantees the existence of at least one real valued point where the supremum of the slope is attained is given below. The idea is to show that this supremum is increasing up to maximal existence time and afterwards, roughly speaking, the wave breaks and the supremum goes to infinity. 

\begin{theorem} \cite{CE98} \label{supremum}
 Let $T>0$ and $u\in C^1([0,T);H_p^2)$. Then, for every $t\in[0.T)$, there exists at least one point $\xi(t)\in\mathbb{R}$ with 
 \begin{equation}
 \label{S}
    S(t):=\sup_{x\in\mathbb{R}}[u_x(x,t)]=u_x(\xi(t),t),
 \end{equation}
and the function $S$ is almost everywhere differentiable on $(0,T)$ with 
\begin{displaymath}
 \frac{dS}{dt}=u_{tx}(\xi(t),t)~~~~a.e.~~ on~~ (0,T).
\end{displaymath}

\end{theorem}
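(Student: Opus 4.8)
The plan is to establish Theorem \ref{supremum} as a general regularity fact about the pointwise supremum of a $C^1$-in-time family of $C^1$-in-space functions, without using any special structure of the equation (\ref{QL}). The existence of a point $\xi(t)$ attaining the supremum is immediate: for fixed $t$, $u(\cdot,t)\in H_p^2\subset C^1_p$ by the Sobolev embedding, so $u_x(\cdot,t)$ is continuous and periodic, hence attains its maximum on the compact period interval $[0,1]$; this gives $\xi(t)\in[0,1]$ (extended periodically to $\mathbb{R}$). The content of the theorem is the almost-everywhere differentiability of $S$ and the formula $S'(t)=u_{tx}(\xi(t),t)$.

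First I would show that $S$ is locally Lipschitz in $t$, which by Rademacher's theorem already yields a.e. differentiability. To do this, fix a compact subinterval $[a,b]\subset[0,T)$. Since $u\in C^1([0,T);H_p^2)$, the map $t\mapsto u_x(\cdot,t)$ is continuous from $[a,b]$ into $H_p^1\subset C_p$, and $t\mapsto u_{tx}(\cdot,t)$ is continuous from $[a,b]$ into $L^2\hookrightarrow$... here one must be a little careful, since $u_t\in H_p^2$ gives $u_{tx}\in H_p^1\subset L^\infty_p$, so in fact $\sup_{[a,b]\times[0,1]}|u_{tx}|=:L<\infty$. Then for $t,\tau\in[a,b]$ and any $x$, the mean value theorem in $t$ gives $|u_x(x,t)-u_x(x,\tau)|\le L|t-\tau|$; taking the supremum over $x$ of $u_x(x,t)$ and using the elementary inequality $|\sup_x g(x)-\sup_x h(x)|\le \sup_x|g(x)-h(x)|$ yields $|S(t)-S(\tau)|\le L|t-\tau|$. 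Hence $S$ is locally Lipschitz, so $S'(t)$ exists for a.e. $t\in(0,T)$.

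Next I would identify the derivative at any point $t_0$ of differentiability of $S$. For the lower bound, fix a maximizer $\xi=\xi(t_0)$, so $u_x(\xi,t_0)=S(t_0)$, and for $h\neq0$ estimate $S(t_0+h)\ge u_x(\xi,t_0+h)=S(t_0)+h\,u_{tx}(\xi,t_0)+o(h)$; dividing by $h>0$ and letting $h\downarrow0$ gives $S'(t_0)\ge u_{tx}(\xi,t_0)$, and dividing by $h<0$ and letting $h\uparrow0$ gives $S'(t_0)\le u_{tx}(\xi,t_0)$. Together these force $S'(t_0)=u_{tx}(\xi(t_0),t_0)$, which is exactly the claimed formula. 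A subtle point worth a remark: $\xi(t)$ need not be unique, but the argument shows the value $u_{tx}(\xi(t),t)$ is the same for every choice of maximizer at a point where $S'(t)$ exists, so the formula is unambiguous there.

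The main obstacle is not any deep analytic difficulty but rather the bookkeeping of regularity: one must be careful that $u_{tx}$ is genuinely bounded on compact time intervals (this uses $u_t(\cdot,t)\in H_p^2$, i.e. the version of the theorem with $u\in C^1([0,T);H_p^2)$ read as stated, together with $H_p^1\hookrightarrow L^\infty_p$ in one space dimension), and that the Taylor expansion $u_x(\xi,t_0+h)=u_x(\xi,t_0)+h\,u_{tx}(\xi,t_0)+o(h)$ holds with the $o(h)$ uniform in the spatial point — which again follows from $t\mapsto u_{tx}(\cdot,t)$ being continuous into $L^\infty_p$, so that the remainder $\int_0^h [u_{tx}(\xi,t_0+\sigma)-u_{tx}(\xi,t_0)]\,d\sigma$ is $o(h)$ uniformly in $\xi$. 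Once these regularity facts are in place, the Lipschitz estimate and the one-sided difference-quotient comparison close the proof. I would also note that this is the periodic analogue of the lemma of Constantin and Escher \cite{CE98}, and the proof is identical up to replacing $\mathbb{R}$ by the circle in the ``attainment of the supremum'' step.
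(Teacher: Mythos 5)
Your proof is correct: the paper itself gives no argument for this statement (it is quoted from \cite{CE98}), and your combination of the Sobolev embedding $H_p^2\hookrightarrow C^1$ for the attainment of the supremum, the Lipschitz bound via $\sup_{x}|u_{tx}|$ plus Rademacher's theorem, and the two-sided difference-quotient comparison at a maximizer is exactly the standard proof in that reference. The regularity bookkeeping you flag (uniformity of the $o(h)$ remainder via continuity of $t\mapsto u_{tx}(\cdot,t)$ into $L^\infty_p$) is handled correctly, so nothing is missing.
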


It follows from Remark \ref{regularity} that we have enough degree of smoothness for the solution $u(x,t)$ to apply Theorem \ref{supremum}. As a preliminary observation, we differentiate (\ref{QL}) with respect to the spatial variable $x$ and obtain:

\begin{equation*}
u_{tx}-u_{xx}-\frac{7}{2}\varepsilon u_x^2-\frac{7}{2}\varepsilon uu_{xx}+\partial_x^2(P*g(u))=0.
\end{equation*}
Moreover, we obtain 
\begin{displaymath}
\partial_x^2(P*g)= \frac{12}{\mu}P*g-\frac{12}{\mu}g.
\end{displaymath}
Therefore, we get
\begin{equation}
\label{diff}
u_{tx}-u_{xx}-\frac{7}{2}\varepsilon u_x^2-\frac{7}{2}\varepsilon uu_{xx}+\frac{12}{\mu}(P*g(u))-\frac{12}{\mu}g(u)=0.
\end{equation}

The blow-up result for equation (\ref{QL}) is then proved as follows:
\begin{proposition}
 If the initial wave profile $u_0\in H_p^3$ satisfies
 \begin{align}
 \label{iniprof}
 \lvert\inf_{x\in[0,1]}\{\partial_x u_0(x)\}\rvert^2 >&
 \frac{12}{\mu\varepsilon}\Big(2n_2C_0^{1/2}+\frac{5}{2}\varepsilon n_{\infty}C_0+\frac{1}{8}\varepsilon^2 n_{\infty}\sqrt{\frac{13}{\mu}}C_0^{3/2}\nonumber\\
 &+\frac{3}{64}\varepsilon^3n_{\infty}\frac{13}{\mu}C_0^2+\frac{7}{4}\varepsilon n_{\infty}C_0+2\sqrt{\frac{13}{\mu}}C_0^{1/2}\nonumber\\
 &+\frac{5}{2}\varepsilon\frac{13}{\mu}C_0+\frac{1}{8}\varepsilon^2(\frac{13}{\mu})^{3/2}C_0^{3/2}+\frac{3}{64}\varepsilon^3(\frac{13}{\mu})^2C_0^{2}\Big)
 \end{align}
where 
\begin{displaymath}
 C_0=\int_0^1 (u_0^2+\frac{\mu}{12}u_{0x}^2)dx>0,
\end{displaymath}
then wave breaking occurs for the solutions of (\ref{QL}) in finite time $T=O(\frac{1}{\varepsilon})$.
\end{proposition}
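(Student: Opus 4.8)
The plan is to adapt the classical Camassa--Holm mechanism for wave breaking. From equation (\ref{QL}) I would derive, along the curve where the slope is largest, a Riccati-type differential inequality for $S(t):=\sup_{x}u_x(x,t)$, show that the size condition (\ref{iniprof}) forces $S$ to become infinite in finite time, and then invoke Proposition~\ref{brk} to conclude that the singularity is exactly a surging breaker. Two structural facts drive the argument: the energy $E$ conserved by (\ref{MAE}), which keeps $u$ and $u_x$ bounded in $L^2$ (hence $u$ bounded in $L^\infty$), and the tracking lemma Theorem~\ref{supremum}.

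First I would fix the regularity. Since $u_0\in H_p^3$, Remark~\ref{regularity} gives a solution with $u\in C([0,T),H_p^3)\cap C^1([0,T),H_p^2)$, in particular $u\in C^1([0,T);H_p^2)$ and $u(\cdot,t)\in H_p^3\subset C^2$; Theorem~\ref{supremum} then provides $\xi(t)$ with $S(t)=u_x(\xi(t),t)$, with $S$ locally Lipschitz and $\frac{dS}{dt}=u_{tx}(\xi(t),t)$ a.e.\ on $(0,T)$. Since $\xi(t)$ is a maximum of the $C^1$ function $x\mapsto u_x(x,t)$ on the circle, $u_{xx}(\xi(t),t)=0$. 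From the conservation of $E$ established in the proof of Proposition~\ref{brk} one has $\int_0^1(u^2+\tfrac{\mu}{12}u_x^2)\,dx\equiv C_0$, hence
\begin{equation*}
\|u\|_{L^2[0,1]}^2\le C_0,\qquad \|u_x\|_{L^2[0,1]}^2\le\tfrac{12}{\mu}C_0,\qquad \|u\|_{L^\infty[0,1]}^2\le\tfrac{13}{\mu}C_0,
\end{equation*}
the last via the periodic Sobolev inequality $\|f\|_{L^\infty}^2\le\|f\|_{L^2}^2+2\|f\|_{L^2}\|f_x\|_{L^2}$ (for the shallow-water range of $\mu$).

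The core step is to evaluate the differentiated equation (\ref{diff}) at $x=\xi(t)$. Using $u_{xx}(\xi(t),t)=0$ and combining the two contributions proportional to $u_x^2$ (one produced by differentiating the transport term $-\tfrac72\varepsilon uu_x$, one arising from $-\tfrac{12}{\mu}g(u)$ through the term $-\tfrac{7}{48}\varepsilon\mu u_x^2$ of $g(u)$), one obtains, a.e.\ on $(0,T)$,
\begin{equation*}
\frac{dS}{dt}=\frac74\varepsilon\,S(t)^2-\frac{12}{\mu}\big(P*g(u)\big)(\xi(t),t)+\frac{12}{\mu}\Big(2u+\tfrac52\varepsilon u^2-\tfrac18\varepsilon^2u^3+\tfrac{3}{64}\varepsilon^3u^4\Big)(\xi(t),t).
\end{equation*}
I would then bound the two trailing, non-quadratic, groups: the local one by powers of $\|u\|_{L^\infty}$, and the convolution one by splitting $g(u)$ into its five monomials and applying to each the cheaper of the Young-type estimates $\|P*h\|_{L^\infty}\le n_2\|h\|_{L^2}$ and $\|P*h\|_{L^\infty}\le n_\infty\|h\|_{L^1}$. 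Feeding in the $L^2$ and $L^\infty$ bounds above makes this estimate equal to $\tfrac{12}{\mu}\mathcal K$, where $\mathcal K$ is precisely the bracketed expression on the right of (\ref{iniprof}); thus
\begin{equation*}
\frac{dS}{dt}\ge\frac74\varepsilon\,S(t)^2-\frac{12}{\mu}\mathcal K\qquad\text{a.e.\ on }(0,T).
\end{equation*}

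To finish: since $C_0>0$, condition (\ref{iniprof}) reads $S(0)^2>\tfrac{12}{\mu\varepsilon}\mathcal K$, i.e.\ $\tfrac{12}{\mu}\mathcal K<\varepsilon S(0)^2$. On any interval where $S\ge S(0)$ we then have $\tfrac{12}{\mu}\mathcal K<\varepsilon S(t)^2$, so $\frac{dS}{dt}\ge\tfrac74\varepsilon S^2-\varepsilon S^2=\tfrac34\varepsilon S^2>0$; hence $S$ is increasing, $S(t)>S(0)$ throughout $(0,T)$, and
\begin{equation*}
\frac{dS}{dt}\ge\frac34\varepsilon\,S(t)^2\qquad\text{a.e.\ on }(0,T).
\end{equation*}
Integrating, $\tfrac1{S(t)}\le\tfrac1{S(0)}-\tfrac34\varepsilon t$, so $S(t)\to+\infty$ as $t\uparrow\tfrac4{3\varepsilon S(0)}$; a solution existing beyond that time would keep $u_x$ bounded, so the maximal existence time satisfies $T\le\tfrac4{3\varepsilon S(0)}=O(1/\varepsilon)$ and is finite. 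As $u_0\in H_p^3\subset H_p^2$, Proposition~\ref{brk} then applies, giving $\sup_{[0,T)\times[0,1]}|u|<\infty$ together with $\limsup_{t\uparrow T}u_x=+\infty$: wave breaking in the form of a surging breaker. The step I expect to be the real obstacle is the constant-matching in the bound for the nonlocal term $\tfrac{12}{\mu}(P*g(u))(\xi(t),t)$ --- tracking all five monomials of $g(u)$ (the $u_x^2$ term included), choosing for each the most economical of the estimates furnished by $n_1,n_2,n_\infty$, and combining with $\|u\|_{L^2}^2\le C_0$, $\|u_x\|_{L^2}^2\le\tfrac{12}{\mu}C_0$, $\|u\|_{L^\infty}^2\le\tfrac{13}{\mu}C_0$ so that the outcome is exactly $\mathcal K$ as written in (\ref{iniprof}); the Riccati comparison and the appeal to Proposition~\ref{brk} are then routine.
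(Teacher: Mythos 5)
Your proposal follows essentially the same route as the paper: evaluate the $x$-differentiated equation at the point $\xi(t)$ where $u_x$ attains its supremum (so $u_{xx}(\xi(t),t)=0$), merge the two $u_x^2$ contributions into $\tfrac74\varepsilon S^2$, bound the remaining local and convolution terms by the constant $\tfrac{12}{\mu}\mathcal{K}$ using the conserved energy, the estimate $\max u^2\le\tfrac{13}{\mu}C_0$ and Young's inequality with $n_2,n_\infty$, and integrate the resulting Riccati inequality $S'\ge\tfrac34\varepsilon S^2$. The only points you omit are the companion bound $S'\le\tfrac{11}{4}\varepsilon S^2$, which the paper extracts from the upper differential inequality to obtain the matching lower estimate $T\ge\tfrac{4}{11\varepsilon S(0)}$, and the (shared with the paper) silent replacement of $\lvert\inf_x\partial_x u_0\rvert$ by $S(0)=\sup_x\partial_x u_0$ when invoking hypothesis (\ref{iniprof}).
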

\begin{proof}
Since (\ref{diff}) is an equality in the space of continuous functions derived for any instant $t$, we can also evaluate the equality at a point $\xi(t)\in \mathbb{R}$ at some fixed time $t$ using (\ref{S}). Besides, $u_{xx}(\xi(t),t)=0$ due to the fact that $H_p^3\subset C^2([0,1])$, and hence $u$ is $C^2$ in the spatial variable, and $\xi(t)$ gives an extremum for $u_x$. 
Thus, equation (\ref{diff}) at $(\xi(t),t)$ becomes 
\begin{equation}
\label{instant}
 S'(t)-\frac{7}{2}\varepsilon S^2(t)= -\frac{12}{\mu}(P*g(u))+\frac{12}{\mu}g(u)
\end{equation}
where 
\begin{displaymath}
 g(u)=2u+\frac{5}{2}\varepsilon u^2-\frac{1}{8}\varepsilon^{2} u^3+ \frac{3}{64}\varepsilon^{3} u^4-\frac{7}{48}\varepsilon\mu u_x^2.
\end{displaymath}
In view of Lemma 2 in \cite{Con00}, for $u\in H_p^3$, 
\begin{equation}
 \max_{x\in[0,1]}u^2(x)\leq \frac{13}{\mu}C_0.
\end{equation}
Furthermore, by Young's inequality, we have
\begin{align*}
&\Vert P*u\Vert_{L^{\infty}[0,1]}\leq \Vert P\Vert_{L^{2}[0,1]}\Vert u\Vert_{L^{2}[0,1]}\leq n_2 C_0^{1/2},\\
&\Vert P*u^2\Vert_{L^{\infty}[0,1]}\leq \Vert P\Vert_{L^{\infty}[0,1]}\Vert u^2\Vert_{L^{1}[0,1]}\leq \Vert P\Vert_{L^{\infty}[0,1]}\Vert u\Vert_{L^{2}[0,1]}^2\leq n_{\infty}C_0,\\
&\Vert P*u^3\Vert_{L^{\infty}[0,1]}\leq \Vert P\Vert_{L^{\infty}[0,1]}\Vert u\Vert_{L^{\infty}[0,1]}\Vert u\Vert_{L^{2}[0,1]}^2\leq n_{\infty}\sqrt{\frac{13}{\mu}}C_0^{1/2}C_0,\\
&\Vert P*u^4\Vert_{L^{\infty}[0,1]}\leq\Vert P\Vert_{L^{\infty}[0,1]}\Vert u^2\Vert_{L^{\infty}[0,1]}\Vert u\Vert_{L^{2}[0,1]}^2\leq n_{\infty}\frac{13}{\mu}C_0C_0,\\
&\Vert P*u_x^2\Vert_{L^{\infty}[0,1]}\leq\Vert P\Vert_{L^{\infty}[0,1]}\Vert u_x\Vert_{L^{2}[0,1]}^2\leq n_{\infty}\frac{12}{\mu}C_0,
\end{align*}
which yields two differential inequalities for the locally Lipschitz function $S(t)$:
\begin{align}
\label{leq}
&S'(t)-\frac{7}{4}\varepsilon S^2(t)\leq\nonumber\\
& \frac{12}{\mu}\Big(2n_2C_0^{1/2}+\frac{5}{2}\varepsilon n_{\infty}C_0+\frac{1}{8}\varepsilon^2 n_{\infty}\sqrt{\frac{13}{\mu}}C_0^{3/2}+\frac{3}{64}\varepsilon^3n_{\infty}\frac{13}{\mu}C_0^2+\frac{7}{4}\varepsilon n_{\infty}C_0\Big)\nonumber\\
&+\frac{12}{\mu}\Big(2\sqrt{\frac{13}{\mu}}C_0^{1/2}+\frac{5}{2}\varepsilon\frac{13}{\mu}C_0+\frac{1}{8}\varepsilon^2(\frac{13}{\mu})^{3/2}C_0^{3/2}+\frac{3}{64}\varepsilon^3(\frac{13}{\mu})^2C_0^{2}\Big)
\end{align}
and
\begin{align}
\label{geq}
&S'(t)-\frac{7}{4}\varepsilon S^2(t)\geq\nonumber\\
& -\frac{12}{\mu}\Big(2n_2C_0^{1/2}+\frac{5}{2}\varepsilon n_{\infty}C_0+\frac{1}{8}\varepsilon^2 n_{\infty}\sqrt{\frac{13}{\mu}}C_0^{3/2}+\frac{3}{64}\varepsilon^3n_{\infty}\frac{13}{\mu}C_0^2+\frac{7}{4}\varepsilon n_{\infty}C_0\Big)\nonumber\\
&-\Big(\frac{24}{\mu}\sqrt{\frac{13}{\mu}}C_0^{1/2}+\frac{3}{2\mu}\varepsilon^2(\frac{13}{\mu})^{3/2}C_0^{3/2}\Big)
\end{align}
for almost every $t\in(0,T)$. Notice that $u_0\neq 0$ ensures that $S(0)>0$. By our assumption (\ref{iniprof}), at $t=0$, the right hand side of the inequality (\ref{geq}) is larger than $-\varepsilon S^2(0)$. Thus, we infer that up to the maximal existence time $T>0$ of the solution, the function $S(t)$ must be strictly increasing, and moreover
\begin{displaymath}
 S'(t)\geq \frac{3}{4}\varepsilon S^2(t)~~for~ a.e.~t\in (0,T).
\end{displaymath}
Dividing by 
\begin{displaymath}
 S^2(t)\geq S^2(0)>0,~~t\in[0,T),
\end{displaymath}
and integrating, we get 
\begin{displaymath}
 \frac{1}{S(t)}\leq \frac{1}{S(0)}-\frac{3}{4}\varepsilon t, ~~t\in (0,T).
\end{displaymath}
Since $S(t)>0$, we must have 
\begin{displaymath}
 \lim_{t\nearrow T} S(t)=\infty,
\end{displaymath}
which reveals that singularities will occur in the form of surging breakers, and
\begin{equation}
 \label{order1}
 T\leq \frac{4}{3\varepsilon S(0)}.
\end{equation}
Additionally, the inequality (\ref{leq}) yields
\begin{displaymath}
 S'(t)\leq \frac{11}{4}\varepsilon S^2(t)~~for~ a.e.~t\in (0,T)
\end{displaymath}
under the same assumption on the initial profile. Hence,
\begin{displaymath}
 \frac{1}{S(0)}-\frac{1}{S(t)}\leq\frac{11}{4}\varepsilon t, ~~t\in (0,T).
\end{displaymath}
Since $\lim_{t\nearrow T} S(t)=\infty$, we get
\begin{equation}
 \label{order2}
 T\geq \frac{4}{11\varepsilon S(0)}.
\end{equation}
From the time estimates (\ref{order1}) and (\ref{order2}), we deduce that the finite maximal existence time $T>0$ is of order $O(\frac{1}{\varepsilon})$.

\end{proof}

\section{Acknowledgement}
The support of The Scientific and Technological Research Council of Turkey (TUBITAK) under ``International Postdoctoral Research Scholarship Programme'' is gratefully acknowledged.

 \end{document}